\documentclass[12pt,twoside]{amsart}
\usepackage{amsmath,amssymb,mathrsfs,graphicx,bbm,amsfonts,amsthm, bm}
\usepackage [latin1]{inputenc}
\usepackage{color}
\usepackage{todonotes}

\usepackage{comment}

\usepackage{graphicx}
\usepackage{dsfont}

\setlength{\topmargin}{0truecm}
\setlength{\headsep}{+1truecm}
\setlength{\oddsidemargin}{+.5truecm}
\setlength{\evensidemargin}{+.5truecm}
\setlength{\textwidth}{15truecm}
\setlength{\textheight}{22truecm}
%%% The above is a format that I think is ok with both US and A4
%%% paper. To maximize the page size use respectively
%\input{myus}
%\input{mya4}

\pagestyle{myheadings}

%%% boldface italic in math mode
\DeclareMathAlphabet{\mathbfit}{OML}{cmm}{b}{it}

%%% indexes equations within sections
\makeatletter
\@addtoreset{equation}{section}
\makeatother

%%%%%%%%%%%% FAST ENVIRONMENT COMMANDS %%%%%%%%%%%%%%%%%%%%%%%%

%% as per the requirement new theorem styles can be included as shown below
\theoremstyle{thmstyleone}%
\newtheorem{theorem}{Theorem}%  meant for continuous numbers
%%\newtheorem{theorem}{Theorem}[section]% meant for sectionwise numbers
%% optional argument [theorem] produces theorem numbering sequence instead of independent numbers for Proposition
\newtheorem{proposition}[theorem]{Proposition}% 

\theoremstyle{thmstyletwo}%

\newtheorem{example}{Example}%
\newtheorem{remark}{Remark}%
\newtheorem{lemma}{Lemma}%
\newtheorem{corollary}{Corollary}%
\newtheorem*{claim*}{Claim}
\newtheorem{assumption}{Assumption}

\theoremstyle{thmstylethree}%
\newtheorem{definition}{Definition}%
\newtheorem*{definition*}{Definition}

   %%%%%%% CAPITAL MATHBB %%%%%%%%%%%%%%%%%%%%%%%%%

 \def \R {{\mathbb R}}
 
 \def \N {{\mathbb N}}
 \def \P {{\mathbb P}}
 
 \def \E {{\mathbb E}}

   %%%%%%% CAPITAL MATHCAL %%%%%%%%%%%%%%%%%%%%%%%%%

 \def \cE {\mathcal{E}}

 \def \cL {\mathcal{L}}
 \def \cM {\mathcal{M}}

 \def \cR {\mathcal{R}}
 
 \def \cT {\mathcal{T}}
 
 \def \cV {\mathcal{V}}

   %%%%%%% GREEK SYMBOLS %%%%%%%%%%%%%%%%%%%%%%%%%

 \def \z {{\z}}

 \def \z {{\zeta}}

%%%%%%% ACCENT SYMBOLS %%%%%%%%%%%%%%%%%%%%%%%%%

\def \à {{\`{a}}}
\def \ì{{\`{\i}}}
\def \ò{{\`{o}}}
\def \è{{\`{e}}}
\def \ù{{\`{u}}}

%%%%%%%%%% generali%%%%%%%%%%%%%%%%%%%%%%%
%%%%%%%%%%%%%%%%%%%%%%

\usepackage{mathtools}
\usepackage{enumitem}

\usepackage{physics}
\usepackage{hyperref}

\def\df{\,\mathrm{d}}
\def\exp{\mathrm{e}^}

 \def \brange{{(0,2)\hspace{-0.1cm}\setminus\hspace{-0.1cm} \{1\}}}

\begin{document}
\title[Precise large deviations through a uniform Tauberian theorem]{Precise large deviations through\\ a uniform Tauberian theorem}

\author{Giampaolo Cristadoro}
\address{Dipartimento di Matematica e Applicazioni,
Universit\`a di Milano-Bicocca,
Via R. Cozzi 55, 20125 Milano, Italy.}
\email{giampaolo.cristadoro@unimib.it}
\author{Gaia Pozzoli}
\address{Department of Mathematics, CY Cergy Paris University, CNRS UMR 8088, 2 avenue Adolphe Chauvin, 95302 Cergy-Pontoise, France. }
\email{gaia.pozzoli@cyu.fr}

\date{}

\begin{abstract}
We derive a large deviation principle for families of random variables in the basin of attraction of spectrally positive stable distributions by proving a uniform version of the Tauberian theorem for Laplace--Stieltjes transforms. The main advantage of this method is that it can be easily applied to cases that are beyond the reach of the techniques currently used in the literature. Notable examples include large deviations for random walks with long-ranged memory kernels, as well as for randomly stopped sums where the random time $\mathrm{N}$ is either not concentrated around its expectation or has an infinite mean. The method reveals the role of the characteristic function when Cram\'er's condition is violated and provides a unified approach within regular variation.

    \par\bigskip\noindent
    {\it MSC 2010:} primary:   60F10, 40E05; secondary: 60G50, 60G52, 82D99, 82B41. 
    \par\smallskip\noindent
    {\it Keywords:} Precise large deviations, Regular variation, Tauberian theorems, Random walks
    \end{abstract}

\maketitle

\section{Introduction}\label{Intro}
The theory of large deviations \cite{Dembo, DenHollander, Ellis, Petrov} investigates the decay rate of probabilities of ``rare'' events. It provides a quantitative description of the probability of events that typically, but not necessarily, concern sums of random variables significantly deviating from expectations. 
To make precise the terms ``fluctuations'' and ``rare events''  in our field of interest, consider a sequence of real-valued random variables $(X_k)_{k=1}^\infty$ and define the corresponding cumulative sum process $S_n\coloneqq \sum_{k=1}^n X_k$. Let $\{a_n\}_{n\in\N}$ and $\{b_n\}_{n\in \N}$ be two sequences with $a_n>0$, $b_n\in \R$ and assume the following convergence in probability 
\begin{equation}\label{eq:rareEvent}
\frac{S_n-b_n}{a_n}\xrightarrow{ \:p\: }0.
\end{equation}
Then $\P[|S_n-b_n|>a_n]$, $\P[S_n-b_n>a_n]$ and $\P[S_n-b_n<-a_n]$ are called \emph{probabilities of large deviations} of the partial sums $S_n$.

By way of illustration, these large deviation probabilities naturally appear in insurance and finance, where the proper functioning of a system is threatened by the occurrence of a rare event \cite{Esscher32}. But plenty of applications can be found across diverse fields such as analysis (with heat conduction in random media; see~\cite{Gartner99}), probability (with random walks in random environments; refer to~\cite{Greven94}), statistical physics and queueing theory (with fluctuations in renewal-reward processes; see for instance~\cite{Zamparo2023}), to cite a few.

\medskip
The paper is organized as follows. In Sections~\ref{subsec:cramer}--\ref{subsec:reg} we properly introduce historical context, notation and objects of interest. In Section~\ref{sec:taub} we present our main result: we state a \emph{uniform} Tauberian theorem for families of bilateral Laplace--Stieltjes transforms (Theorem \ref{thm:tauberian}).   As its direct by-product, we formulate a precise large deviation principle for sequences of regularly varying random variables stemming  from a proper uniform control on the behaviour  near the origin of their --- non-analytic ---   Laplace--Stieltjes transforms (Theorem~\ref{thm:LDP}). 
In Section~\ref{sec:Appl} we show that this uniform control is possible in different models, and we derive the corresponding large deviation results from the application of our Theorem~\ref{thm:LDP}. 
The discussion of the proofs of the main theorems is postponed to Section~\ref{sec:proofs}.

\subsection{Classical large deviations: Cramer's condition}\label{subsec:cramer}
Historically, it is convenient to start with a prime example of traditional importance. Let $(X_k)_{k\in \N}$ be  independent and identically distributed (i.i.d.) random variables having a common distribution function $F$ with expected value $\mu$ and finite variance $\sigma^2$. As a consequence of the \emph{Central Limit Theorem} (CLT), the large deviation probabilities with $b_n\equiv n\mu$ and $a_n\equiv \sigma\sqrt{n}\cdot x_n$ (for some $x_n\to \infty$ as $n \to \infty$) depict how the partial sum differs from its mean by an amount that is well-beyond the ``normal'' fluctuations identified by the natural-scale sequence $\sqrt n$. 
The foundational result appearing in Cram\'er's pioneering work \cite{Cramer1938} fits into this setting under the additional assumption, also known as Cram\'er's condition, that there exists the moment generating function of $F$ in a neighborhood of the origin
\begin{equation}\label{eq:CramerCond}
\cM_F(h)\coloneqq\int_{-\infty}^\infty \exp{hx}\df F(x)<\infty\quad\text{for all}\quad |h|<A,\quad \text{where}\quad A>0.
\end{equation}
This hypothesis is responsible for an exponential decay of the probabilities of large deviations from the law of large numbers, with rate function obtained as the Legendre--Fenchel transform of the cumulant generating function of $F$\footnote{It is evident that for $\sqrt n\ll a_n\ll n$ neither the CLT ($a_n\asymp \sqrt n$) nor Cram\'er's statement ($a_n\asymp n$) embed the asymptotic behaviour of the convergence~\eqref{eq:rareEvent} (where $f(x)\asymp g(x)$ if and only if $\exists\,c>0$ such that $\lim_{x\to\infty}f(x)/g(x)=c$). This is the realm of the so-called \emph{moderate deviations} \cite[\S 3.7]{Dembo}, where a universal quadratic rate function appears: fine features in $\cM_F$ are not relevant in this range of scales as in the CLT regime,
\[
\lim_{n\to\infty}\frac n {a_n^2}\log\P[S_n-\mu n>c a_n]=-\frac{c^2}{2\sigma^2}\quad\text{for all}\quad c>0.
\]
} 
\[
\lim_{n\to\infty}\frac 1 n \log \P[S_n-\mu n>c n]=-\sup_{u\in\R}\{(\mu+c)u-\log \cM_F(u)]\}\quad\text{for all}\quad c>0.
\]
For a more in-depth discussion with the general concept of a large deviation principle and related ideas such as the Cram\'er's series, that is beyond the scope of this article, refer to~\cite{Dembo, Petrov}.

\noindent 
This \emph{rough} (at logarithmic scale) large deviation result has been subsequently generalized also to weak dependent sequences by G\"artner and Ellis \cite{Gartner77,Ellis84}, who further emphasized the key role played by the moment generating function of $S_n$ in the decay of rare event probabilities (see also~\cite{Bingham2008}).

\subsection{Subexponentiality and the big-jump domain}
However, in several contexts ranging from mathematical finance~\cite[Ch.~8]{EKM} and time series analysis~\cite{Kulik} to queueing theory~\cite{Pakes75} and traffic models~\cite[Ch.~8]{Resnick}, distributions with exponentially decaying tails induced by the Cram\'er's condition are not suited for realistic modeling. 
When assumption~\eqref{eq:CramerCond} is violated, the distributions are said to be \emph{heavy-tailed}~\cite{FKZ}, since we can equivalently write, by defining the tail distribution $\bar F(x)\coloneqq \int_x^\infty \df F(x)$,
\[
\limsup_{x\to\infty}\exp{hx}\bar F(x)=\infty\quad\text{for all}\quad h>0.
\]
In the aforementioned classical case, an extremely large value of the sum of i.i.d. light-tailed random variables was essentially determined by the collection of several ``small increments''. Different mechanisms, instead, are behind the appearance of extreme events when $(X_k)_{k\in\N}$ are i.i.d. with heavy-tailed distribution $F$, and will influence large deviations statements as well.
For explanatory purposes, let us introduce the family of the so-called \emph{subexponential} distributions \cite{Chistyakov64,Embrechts80}: given a common heavy-tailed distribution function $F$ on $[0,\infty)$\footnote{For an extension to distributions on the entire real line $\R$ refer to~\cite[Ch.~3]{FKZ}}, the following additional regularity property is required\footnote{Throughout this paper, given two functions $f(x)$ and $g(x)$ we write $f(x)\sim g(x)$ if and only if $\lim_{x\to\infty}f(x)/g(x)=1$.}
\[
\P[S_n>x]\sim n\,\bar F(x)\sim\P[\max\{X_1,\dots,X_n\}>x]\quad\forall\, n\geq 1 \quad\text{as}\quad x\to\infty.
\]
In this special but very broad class, where tail distributions are well-behaved under convolutions, the extreme event for the partial sum is dominated by a single\,---\,the largest\,---\,summand, and this phenomenon is reflected in the use of the expression ``big-jump principle''.
No large deviation results in this context were obtained until the 1960's, when a new interest arose with works by \cite{Nagaev65, Heyde67a, Heyde67b, Heyde68, Nagaev69a, Nagaev69b, Nagaev79, Nagaev82, ClineHsing, Mikosch98,Ng2004, Denisov2008}. 
Although under possibly different hypotheses, all the statements can be summarized by the following \emph{precise} large deviation result: given $(x_n)_{n\in\N}$ such that $x_n\to\infty$ and $ n\bar F(x_n)\to 0$ as $n\to\infty$, 
\begin{equation}\label{eq:PLD}
\lim_{n\to\infty}\sup_{x\geq x_n}\left|\frac{\bar F_n(x)}{n\,\bar F(x)}-1  \right|= 0,
\end{equation}
where $\bar F_n$ denotes the tail distribution of the centered partial sum $S_n-b_n$. The term ``precise'' stresses the presence of an exact asymptotic equivalence, as opposed to  Cram\'er-type estimates at logarithmic scale.

\subsection{Regular variation and Tauberian theorems}\label{subsec:reg}
In the following, we will focus on the regularly varying tail distributions, which are a subclass of  subexponential distributions, with the aim of proposing a new approach to the corresponding theory of large deviations.
Although precise large deviation principles are available in the literature for subexponential families more general than the regularly varying ones \cite{ClineHsing, Ng2004, Denisov2008}, this restriction allows us to highlight a new perspective on the subject.

\medskip
\begin{definition*}
A tail distribution function $\bar F$ is \emph{regularly varying with index $-\alpha$}, $\bar F\in \cR\cV_{-\alpha}$, if 
\[
\lim_{x\to\infty} \frac{\bar F(\Lambda x)}{\bar F(x)}=\Lambda^{-\alpha}\quad\text{for every}\quad \Lambda>0\quad\text{and some}\quad \alpha \geq 0,
\]
or equivalently if $\bar F(x)=x^{-\alpha} \ell(x)$ with $\ell(x)$ slowly varying at infinity, that is $\lim_{x\to\infty}\ell(\Lambda x)/\ell(x)=1$, which means $\ell(x)\in \cR\cV_0$.
\end{definition*}

\medskip
\noindent
Whereas the characteristic function naturally arises in the study of fluctuations for Cram\'er-type situations, to our knowledge no explicit reference can be found in the literature for the framework of regularly varying distributions. Classical proofs are essentially based on direct probabilistic estimates and the use of a standard truncation argument. Our purpose, instead, is to shed light on the role played by characteristic functions that are not analytic at the origin, in order to parallel Cram\'er's technique in the heavy-tailed context; see Theorem~\ref{thm:LDP}.

The classical Tauberian theory \cite{Bingham} concerns the derivation of the asymptotic behaviour of a function from the knowledge of asymptotic estimates on a properly defined integral transform. Its application to the sum at fixed $n$ of i.i.d. random variables with regularly varying tails leads back to the single big-jump principle previously discussed. In the same vein, we will show (see Theorem~\ref{thm:tauberian} and Section~\ref{sec:proofs}) that a fine control on an indexed family of integral transforms translates into additional information about the original sequence of functions, with remarkable implications for the study of large deviations; see Theorem~\ref{thm:LDP} and Section~\ref{sec:Appl}.

\section{Statement of the main results}
\label{sec:taub}
We start by introducing some notation. 
Let $(G_t(x))_{t\geq0}$ be a family of real-valued functions locally of bounded variation on $\R$ 
(refer to~\cite{Bingham})  
and denote by 
\[
\hat G_t(s)\coloneqq \int_{-\infty}^\infty \exp{-sx}\df G_t(x)
\]
the corresponding family of Laplace--Stieltjes transforms, that are assumed to exist in some right neighborhood of the origin $s\in(0,\xi_t)$. Then consider a family $(L_t(x))_{t\geq0}$ of slowly varying functions satisfying:

\medskip
\begin{definition}\label{def:SVIP}
We write $(L_t)_{t\geq0}\in \cL$ if $(L_t(x))_{t\geq0}$ is a \emph{family of slowly varying functions at infinity} with the following properties: there exists $x_0$, $t_0$ such that $L_t(x)>0$ for all $t\geq t_0$ and $x\geq x_0$; for every given $\Lambda>0$ and $\eta>0$, there exists $\bar x=\bar x(\eta,\Lambda)$ and $\bar t=\bar t(\eta,\Lambda)$ with $\bar t\geq t_0$ and $\min\{\bar x, \Lambda \bar x\}\geq x_0$ such that
\[
\sup_{x\geq \bar x }\left|\frac{L_t(\Lambda x)}{L_t(x)}-1 \right|<\eta\qquad \forall \,t\ge \bar t\,.
\] 
Furthermore, we say that $(L_t)_{t\geq0}\in\cL_\alpha^+\subseteq \cL$ if also the following property holds eventually in $x$ independently of $t$: $x^\alpha L_t(x)$ is non-decreasing.
\end{definition}

\medskip
\begin{remark}\label{rmk:SV}
We note explicitly that if $(L_t(x))_{t\geq 0}$  is a family of separable functions of the two independent variables $L_t(x)\coloneqq c_t \ell(x)$, with $(c_t)_{t\geq0}$ a family of eventually positive constants and $\ell(x)$ an eventually positive slowly varying function at infinity, then  $(L_t)_{t\geq0}\in \cL$. Moreover, without loss of generality, we can always assume that $(L_t)_{t\geq0}\in\cL_\alpha^+$ for every $\alpha>0$ by the monotone equivalents theorem~\cite[Thm.~1.5.3]{Bingham}. This is often the case in applications (see Section~\ref{sec:Appl}): we will refer to this class as $(L_t)_{t\geq0}\in\cL^+_\text{sep}$. 
\end{remark}

\medskip
The following result represents an extension of the Karamata's classical Tauberian theorem for a one-sided Laplace--Stieltjes transform~\cite[Thm.~1.7.1]{Bingham} to a family of bilateral Laplace--Stieltjes transforms indexed by a continuous (or countable) parameter. 

\medskip
\begin{theorem}[\emph{uniform} Tauberian theorem] \label{thm:tauberian}
Let $G_t(x)$ be \emph{non-decreasing} $\forall \, t\geq0$ with $\lim_{x\to-\infty}G_t(x)=0$. Suppose that there exists $(s_t)_{t\geq 0}$ with $s_t>0$ for all $t$, $s_t=o(\xi_t)$  and $s_t\to 0$ as $t\to\infty$ such that, $\forall\,\lambda>0$ and some $\alpha\geq 0$, the family $(\hat G_t(s))_{t\geq 0}$ satisfies 
\begin{equation}
\label{eq:HypTaub}
\lim_{t\to\infty}\sup_{s\in(0,\lambda s_t]}\left |\frac{\hat G_t(s)}{\Gamma(\alpha+1)L_t(1/s)s^{-\alpha}}-1\right|=0\,,
\end{equation}
where $(L_t)_{t\geq 0}\in\cL_\alpha^+$.
Then, defining $x_t\coloneqq 1/s_t$,
\[
\lim_{t\to\infty} \sup_{x\geq x_t}\left|\frac{G_t(x)}{L_t(x)x^\alpha}-1 \right|=0.
\]
\end{theorem}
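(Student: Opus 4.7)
The plan is a rescaling/contradiction argument: choose a putative bad scale, rescale $G_t$ there so that its Laplace--Stieltjes transform converges to $\Gamma(\alpha+1)s^{-\alpha}$ pointwise in $s>0$, and then invert using the extended continuity theorem for non-decreasing functions. Set $H_t(x)\coloneqq G_t(x)/(L_t(x)x^\alpha)$; the target is $\sup_{x\ge x_t}|H_t(x)-1|\to 0$. If this fails, there exist $\eta>0$, $t_n\to\infty$ and $y_n\ge x_{t_n}=1/s_{t_n}$ with $|H_{t_n}(y_n)-1|\ge\eta$. Since $s_{t_n}\to 0$, necessarily $y_n\to\infty$.

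The next step is to rescale by $y_n$ and introduce the non-decreasing functions
\[
\tilde G_n(u)\coloneqq\frac{G_{t_n}(y_n u)}{L_{t_n}(y_n)\,y_n^\alpha},\qquad u\in\R,
\]
whose bilateral Laplace--Stieltjes transform equals $\hat{\tilde G}_n(s)=\hat G_{t_n}(s/y_n)/(L_{t_n}(y_n)y_n^\alpha)$ by a change of variables. For any fixed $s>0$ the bound $s/y_n\le s\,s_{t_n}\le\lambda s_{t_n}$ (valid as soon as $\lambda\ge s$) places the argument in the range of the uniform hypothesis \eqref{eq:HypTaub}, yielding
\[
\hat{\tilde G}_n(s)=(1+o(1))\,\Gamma(\alpha+1)\,s^{-\alpha}\,\frac{L_{t_n}(y_n/s)}{L_{t_n}(y_n)}.
\]
The uniform slow-variation condition of Definition~\ref{def:SVIP}, applied with $\Lambda=1/s$ together with $y_n\to\infty$ and $t_n\to\infty$, forces the last ratio to $1$. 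Hence $\hat{\tilde G}_n(s)\to\Gamma(\alpha+1)s^{-\alpha}$ for every $s>0$, which is precisely the Laplace--Stieltjes transform of the non-decreasing function $u\mapsto u^\alpha$ on $[0,\infty)$.

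Finally I would invoke the extended continuity theorem for Laplace--Stieltjes transforms of non-decreasing functions: the pointwise convergence of $\hat{\tilde G}_n$ on $(0,\infty)$, combined with uniform local boundedness of the $\tilde G_n$ (coming from the Markov-type bound $e^{-sA}\tilde G_n(A)\le\hat{\tilde G}_n(s)$ and the asymptotic just established), yields vague convergence of the associated measures; since the limit $u\mapsto u^\alpha$ is continuous on $(0,\infty)$ this convergence is pointwise, so $\tilde G_n(u)\to u^\alpha$ at every $u>0$. Evaluating at $u=1$ gives $H_{t_n}(y_n)=\tilde G_n(1)\to 1$, contradicting $|H_{t_n}(y_n)-1|\ge\eta$.

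The principal subtlety is the uniform handling of the ratio $L_{t_n}(y_n/s)/L_{t_n}(y_n)$: it is precisely the joint uniformity in $(t,x)$ built into Definition~\ref{def:SVIP} that legitimizes passing this ratio to $1$, and a merely pointwise slow-variation assumption in each fixed $t$ would be insufficient. The positivity and eventual monotonicity of $x^\alpha L_t(x)$ encoded in $\cL_\alpha^+$ ensure that the rescaled objects $\tilde G_n$ are well defined and that the Markov-type tightness bound is usable. Beyond these points, the argument is essentially a transcription of the scaling proof of Karamata's classical Tauberian theorem into the setting of an indexed family, with the contradiction hypothesis providing the specific sequence $y_n$ at which the scaling is performed.
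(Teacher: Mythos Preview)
Your approach is correct and genuinely different from the paper's. The paper follows Karamata's direct polynomial method: it sandwiches the function $j(x)=x^{-1}\mathbf 1_{[e^{-1},\infty)}(x)$ between two polynomials $p_\epsilon\le j\le P_\epsilon$ with $\int_0^\infty e^{-x}x^{\alpha-1}(P_\epsilon-p_\epsilon)(e^{-x})\,\df x\le\epsilon\,\Gamma(\alpha)$, and then uses hypothesis~\eqref{eq:HypTaub} at the finitely many arguments $(k+1)s$, $0\le k\le\deg P_\epsilon$, together with Definition~\ref{def:SVIP} at the finitely many scales $\Lambda=(k+1)^{-1}$, to bound $|G_t(1/s)/(L_t(1/s)s^{-\alpha})-1|$ explicitly. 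That route is constructive and self-contained but forces a separate treatment of $\alpha=0$ (the identity in~\eqref{eq:failAlpha0} degenerates), handled via an auxiliary continuous function $\mathfrak j$ and a final appeal to Lemma~\ref{lem:FTL}. Your rescaling plus continuity-theorem argument is shorter, treats all $\alpha\ge 0$ uniformly, and makes transparent why~\eqref{eq:HypTaub} must hold for \emph{every} $\lambda>0$: each fixed $s>0$ in the rescaled transform invokes the hypothesis at $\lambda=s$. One detail worth tightening: the extended continuity theorem you cite is normally stated for measures on $[0,\infty)$, whereas $dG_t$ is bilateral. You should record that the negative-axis contribution is asymptotically negligible---for instance $\tilde G_n(-A)\le e^{-sA}\hat{\tilde G}_n(s)$ and, for the integral tail, $\int_{-\infty}^{-A}e^{-su}\,d\tilde G_n(u)\le e^{-(s'-s)A}\hat{\tilde G}_n(s')$ for any fixed $s'>s$ (admissible because $s_t=o(\xi_t)$ guarantees $s'/y_n<\xi_{t_n}$ eventually)---after which the one-sided theorem applies to $\tilde G_n-\tilde G_n(0^-)$. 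Incidentally, your argument seems to require only the eventual positivity of $L_t$, not the monotonicity of $x^\alpha L_t(x)$; in the paper that monotonicity enters only through Lemma~\ref{lem:FTL} in the $\alpha=0$ branch.
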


\medskip
\begin{remark}
A formulation of Tauberian results for a family of one-sided Laplace--Stieltjes transforms indexed by some non-trivial index set $\mathcal I$ can be found in~\cite[\S~2]{Lai76}.
In that setting, as noticed by the author, the statement is a straightforward consequence of the case where $\mathcal I$ is a singleton. However, the hypothesis~(2.2) of~\cite[Thm.~3]{Lai76}  fails for the families considered in this article, which require a finer control in order to derive a Tauberian result.
\end{remark}

\medskip
The following theorem, which parallels Cram\'er-type statements, is a consequence of Theorem~\ref{thm:tauberian}, and provides a general large deviation principle valid for several models under regular variation. 
Hereinafter, we will restrict our attention to the families of slowly varying functions considered in Remark~\ref{rmk:SV}.

\medskip
\begin{theorem}\label{thm:LDP}
Let $(Z_t)_{t\geq 0}$ be a family of $\R$-valued random variables with distribution functions $(F_t(x))_{t\geq 0}$, whose Laplace--Stieltjes transforms satisfy, for $\beta \in \brange$,
\[
\hat F_t(s)=1-\Gamma(1-\beta)L_t(1/s)s^\beta+\mathcal E_t(s),\quad s\ge0,
\]
where, for fixed $t$,  $\mathcal E_t(s)=o(L_t(1/s)s^{\beta})$ as $s\to 0^+$ and $(L_t)_{t\geq 0}\in\cL^+_\text{sep}$. 
Assume that $Z_t\geq 0$ if $\beta\in(0,1)$.
Consider  $(s_t)_{t\geq 0}$ with $s_t>0$ for all $t$ and $s_t\to 0$ as $t \to \infty$ such that $\forall\,\lambda>0$
\begin{equation}\label{eq:controlE}
\lim_{t\to\infty}\sup_{s\in(0, \lambda s_t]}\frac{|\mathcal E_t(s)|}{L_t(1/s)s^\beta}=0.
\end{equation}
Then, for $x_t\coloneqq 1/s_t$, the following precise large deviation principle holds:
\[
\lim_{t\to\infty}\sup_{x\geq x_t}\left|\frac{\P[Z_t>x]}{L_t(x)x^{-\beta}}-1  \right|=0.
\]
\end{theorem}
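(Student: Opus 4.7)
The plan is to deduce Theorem~\ref{thm:LDP} from the uniform Tauberian theorem (Theorem~\ref{thm:tauberian}) by associating to the right tail $\bar F_t(x):=\P[Z_t>x]$ a non-decreasing auxiliary function whose Laplace-Stieltjes transform has the form prescribed in~(\ref{eq:HypTaub}). Since a single integration by parts produces a transform that scales as $s^{\beta-1}$, whereas Theorem~\ref{thm:tauberian} only admits $\alpha\geq 0$, the argument naturally splits according to whether $\beta\in(0,1)$ or $\beta\in(1,2)$: in the latter case the tail must be integrated twice.

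For $\beta\in(0,1)$, where $Z_t\geq 0$, set $G_t(x):=\int_0^x \bar F_t(y)\,dy$ for $x\geq 0$ and $G_t(x):=0$ for $x<0$. This $G_t$ is non-decreasing with $G_t(-\infty)=0$, and a standard integration by parts yields $\hat G_t(s)=(1-\hat F_t(s))/s$. Substituting the assumed expansion and using $\Gamma(2-\beta)=(1-\beta)\Gamma(1-\beta)$, one obtains
\[
\hat G_t(s)=\Gamma(\alpha+1)\,\widetilde L_t(1/s)\,s^{-\alpha}-\frac{\mathcal E_t(s)}{s},
\]
with $\alpha:=1-\beta\in(0,1)$ and $\widetilde L_t:=L_t/(1-\beta)\in\cL^+_{\text{sep}}$. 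Hypothesis~(\ref{eq:controlE}) is then precisely the uniform condition~(\ref{eq:HypTaub}), and Theorem~\ref{thm:tauberian} delivers
\[
\lim_{t\to\infty}\sup_{x\geq x_t}\left|\frac{G_t(x)}{\widetilde L_t(x)\,x^{1-\beta}}-1\right|=0.
\]

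For $\beta\in(1,2)$ the expansion of $\hat F_t$ forces $\E[Z_t]=0$ (otherwise the linear term $s\,\E[Z_t]$ would dominate $s^\beta L_t(1/s)$), so that $m_t:=\E[Z_t^+]=\E[Z_t^-]$ is finite. The correct auxiliary object is the doubly-integrated tail
\[
\Phi_t(x):=\int_0^x\!\int_y^\infty \bar F_t(z)\,dz\,dy,\qquad x\geq 0,
\]
extended by $0$ on $\R_-$; it is non-decreasing with $\Phi_t(-\infty)=0$. A Fubini computation gives $\hat\Phi_t(s)=(m_t-\hat G_t(s))/s$, and combining the bilateral identity $\hat G_t(s)=(1-\hat F_t(s))/s+\int_{-\infty}^0 e^{-sx}F_t(x)\,dx$ with the fact that the boundary integral equals $m_t+O(s)$ yields, up to a positive multiplicative constant and errors absorbed by~(\ref{eq:controlE}),
\[
\hat\Phi_t(s)=\Gamma(\alpha+1)\,\widetilde L_t(1/s)\,s^{-\alpha}\bigl(1+o(1)\bigr)
\]
uniformly on $s\leq\lambda s_t$, with $\alpha:=2-\beta\in(0,1)$ and a rescaled family $\widetilde L_t:=L_t/[(2-\beta)(\beta-1)]\in\cL^+_{\text{sep}}$. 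Theorem~\ref{thm:tauberian} then yields $\Phi_t(x)\sim\widetilde L_t(x)\,x^{2-\beta}$ uniformly for $x\geq x_t$.

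The last step passes from $G_t$ (resp.\ $\Phi_t$) to $\bar F_t$ itself by a standard de~l'Hospital-type bracketing that exploits the monotonicity of $\bar F_t$:
\[
\frac{G_t((1+\delta)x)-G_t(x)}{\delta x}\leq \bar F_t(x)\leq \frac{G_t(x)-G_t((1-\delta)x)}{\delta x},
\]
which, combined with the uniform regular variation of $L_t\in\cL$, gives the announced LDP after letting $t\to\infty$ and then $\delta\to 0$. For $\beta\in(1,2)$ the same sandwich is iterated twice: first applied to $\Phi_t$ (whose derivative $\int_y^\infty \bar F_t$ is monotone) to retrieve the uniform asymptotics of $\int_x^\infty \bar F_t(y)\,dy$, then to this integral to retrieve those of $\bar F_t$. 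The main obstacle I anticipate lies in the case $\beta\in(1,2)$: one must control the boundary integral $\int_{-\infty}^0 e^{-sx}F_t(x)\,dx$ uniformly in $t$ and show that it cancels with $m_t$ up to a remainder that is absorbed by~(\ref{eq:controlE}), without imposing extra hypotheses on the left tail beyond what is implicit in the stated expansion of $\hat F_t$.
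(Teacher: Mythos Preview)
Your treatment of the case $\beta\in(0,1)$ and the final monotone-density passage coincide with the paper's: the paper also sets $G_t(x)=\int_0^x\bar F_t$, applies Theorem~\ref{thm:tauberian} with $\alpha=1-\beta$, and then invokes a uniform monotone-density lemma (Lemma~\ref{lem:FTL}) whose proof is exactly the bracketing inequality you wrote down.

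For $\beta\in(1,2)$ your route diverges from the paper's, and the obstacle you flag is genuine and not removable from the stated hypotheses. The expansion of $\hat F_t(s)$ constrains only the \emph{combination} of left- and right-tail contributions; it gives no uniform rate for $\int_{-\infty}^0(e^{-sx}-1)F_t(x)\,\df x$, and your claim that this equals $m_t+O(s)$ would require a uniform second-moment bound on $Z_t^-$, which is nowhere assumed (and typically false in the regime $\beta<2$). The paper sidesteps the issue by a \emph{bilateral} choice of auxiliary function: with $FF_t(x):=\int_{-\infty}^xF_t(y)\,\df y$ and $\theta$ the Heaviside function, it takes $\df G_t(x)=[FF_t(x)-x\theta(x)]\,\df x$, so that two integrations by parts give the exact identity
\[
\hat G_t(s)=\frac{\hat F_t(s)-1}{s^2},
\]
with no left-tail residue to control. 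The price is that monotonicity of $G_t$ is no longer trivial; the paper obtains it from a short convexity argument (Lemma~\ref{lem:asympt}): $FF_t$ is convex and, since $\E[Z_t]=0$, has asymptote $y=x$ at $+\infty$, whence $FF_t(x)\geq x$ for all $x$ and thus $FF_t(x)-x\theta(x)\geq 0$. Theorem~\ref{thm:tauberian} is then applied with $\alpha=2-\beta$, followed by Lemma~\ref{lem:FTL} twice, exactly as you propose. In short: your one-sided $\Phi_t$ makes monotonicity free but leaves an uncontrolled boundary term; the paper's bilateral $G_t$ trades that term away at the cost of a one-line convexity observation.
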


\noindent
Note that~\eqref{eq:controlE} is a natural generalization of the condition of the classical Tauberian theorem, which is valid for a single element of the family: while for fixed $t\geq 0$ we have that $\cE_t(s)=o(s^\beta)$, here the aim is to find a right neighborhood of the origin where $\cE_t(s)$ is sub-leading uniformly for the whole family.

\noindent
Recall that a stable law~\cite[Ch.~2]{Ibragimov} with index $\beta\in(0,2)$ is called \emph{spectrally positive} if its L\'evy measure is supported on $(0,\infty)$. For stable distributions, this condition is necessary and sufficient for the Laplace transform to be finite in a neighbourhood of the origin. The expansion of $\hat F_t(s)$ in Theorem~\ref{thm:LDP} is precisely the one arising for distributions in the basin of attraction of such stable laws, which explains why Laplace--Stieltjes techniques naturally apply in this setting.

\medskip
\begin{remark}
In applications, the conditions ensuring the validity of~\eqref{eq:controlE} will play the role of the so-called \emph{large deviation conditions}.
\end{remark}

\medskip
\begin{remark}
In the classical scenario of sums of i.i.d. random variables, the large deviation probabilities are given by the product of the number of summands and the tail distribution of the single increment (see~\eqref{eq:PLD}). A similar linear behaviour in the number of summands, albeit with a non-trivial correction on the proportionality factor, has been already observed in a model with correlated increments~\cite{Konst2005}. On the other hand, it is apparent from Theorem~\ref{thm:LDP}  that different asymptotic behaviours are not ruled out. As an example, in the next section we will consider a model with correlated summands where the large deviation probabilities have a non-linear scaling in the number of increments; refer to Corollary~\ref{cor:InonID} and the subsequent discussion. 
\end{remark}

\section{Applications}
\label{sec:Appl}
In this  section, we will apply Theorem~\ref{thm:LDP} to investigate large deviations  for well-established and less classical random walk models stemming both from physics and math communities.
By way of illustration, jointly with the random walk theory, regular variation provides a powerful framework to describe phenomena of anomalous diffusion, that are extensively observed in physics, biology and chemical kinetics. Superdiffusive models, indeed, require that clusters of jumps on a small scale are alternated with larger increments: early examples can be found in~\cite{Mandelbrot82, Montroll65, Shlesinger82, Klafter87}.
Equally, regular variation and large deviation theory are essential for several remarkable applied issues in finance and insurance~\cite{EKM}, since their business policies are shaped on the analysis of the risk function when large claims occur.

\medskip
More specifically, we will derive large deviation estimates for two emblematic classes of models.  These examples, besides having interest in their own, make apparent  the usefulness of a unified approach.
In the first part, we obtain the large deviation probabilities for the sum of independent, but not necessarily identically distributed, random variables with regularly varying tail distributions (see Proposition~\ref{prop:RVI}) for a larger class than previously known. 
As a byproduct, our result allows us to consider  non-trivial examples  of random walk with dependent increments (see Corollary~\ref{cor:InonID}). 
In the second part, we apply our method to the case where the sum is stopped at a random time independent of the sum (see Corollaries~\ref{cor:RSFM} and~\ref{cor:RSIM}).  
Our method permits to deal with a different setting of stopping time, enriching the literature on the subject.
\\

\medskip
Throughout this section, $((X_{k,n})_{k=1}^n)_{n\in\N}$ will denote a row-wise triangular array. Given $n\in\N$, we assume that $ X_{1,n}, X_{2,n}, \dots, X_{n,n}$ are independent, non-negative random variables, each belonging to $\cR\cV_{-\beta_{k,n}}$ with index $\beta_{k,n} \in \brange$. Moreover, $S_n=\sum_{k=1}^n X_{k,n} $ will stand for the corresponding $n$-th partial sum.
 
\noindent
We define
\begin{equation}\label{eq:b}
 b_n\coloneqq\begin{cases}
0\qquad&\text{if}\quad\displaystyle\min_{1\leq k\leq n}\beta_{k,n}\in(0,1),\\
\E[S_n]\qquad&\text{if}\quad\displaystyle\min_{1\leq k\leq n}\beta_{k,n}\in(1,2),
\end{cases}
\end{equation}
and we will denote by $\hat{F}^{(k,n)}(s)=1+ a_{k,n}(s)$  the Laplace--Stieltjes transform of the distribution function of either $X_{k,n} $ if $b_n=0$\,---\,in which case $a_{k,n}(s)< 0$ by definition ($a_{k,n}(s)=0$ only if $X_{k,n}=0$ a.s.)\,---\,or $X_{k,n}-\E[X_{k,n}]$ otherwise\,---\,in which case  $a_{k,n}(s)>0$ for sufficiently small $s>0$ ($a_{k,n}(s)=0$ only if $\operatorname{Var}(X_{k,n})=0$). Moreover, we will use the notation $A_n(s)\coloneqq\sum_{k=1}^n a_{k,n}(s)$.

\subsection{Sums with independent increments}\label{sec:NRsums}
First formulations for large deviation probabilities of independent but not identically distributed summands attracted to the normal law can be derived from~\cite{Gartner77,Ellis84}. Subsequently, the result was extended in~\cite{BollLit1} to independent random variables with finite mean but satisfying the generalized central limit theorem (GCLT), under the additional hypothesis that the average of their tail distribution functions $\bar F^{(j)}$ converges to some limit distribution $\bar F_\infty \in\cR\cV_{-\beta} $
\begin{equation}\label{eq:almostIID}
\lim_{n\to\infty} \frac{\frac 1 n \sum_{j=1}^n \bar F^{(j)}(x)}{\bar F_\infty(x)}=1,\qquad \left( \text{and}\quad\lim_{n\to\infty} \frac{\frac 1 n \sum_{j=1}^n  F^{(j)}(-x)}{ F_\infty(-x)}=1\right)
\end{equation}
uniformly in $x\geq \tilde x$ for some $\tilde x>0$.
This assumption essentially makes precise the concept of an approximately i.i.d. situation. 
Here we provide an extension of these results  including the range $\beta\in(0,1)$, broadening the class of not identical distribution functions stemming from~\eqref{eq:almostIID} and reaching the optimal boundary of the big jump domain.
\medskip

We will use the following generalisation of the almost i.i.d. condition~\eqref{eq:almostIID}:

\medskip
%.%%%%%. ----------- ASSUMPTION 2
\begin{assumption}\label{assumptionRVI2}  
There exist $\beta\in\brange$, $(L_n)_{n\in\N}\in \cL^+_\text{sep}$ and a sequence $s_n\searrow0$ such that, $\forall\, \lambda>0$ 
$$
\sup_{s\in(0, \lambda s_n]}  \frac{A_n(s)}{-\Gamma(1-\beta)L_n(1/s)s^{\beta}}  \longrightarrow 1 \quad \textrm{as} \quad n\to \infty.
$$
\end{assumption}

\noindent
Note that Assumption~\ref{assumptionRVI2} implies the following two facts:
\begin{enumerate}[label=(\roman*)]
\item $\beta\equiv\min_{1\leq k\leq n}\beta_{k,n}$;
\item For all $n$ large enough, there exists $\mathcal I_n\subseteq\{1,2,\dots,n\}$ such that $\beta_{k,n}\equiv \beta$ for all $k\in\mathcal I_n$ and $\# \mathcal I_n\asymp_1 n$.\footnote{We write $f(x) \asymp_1 x$ if $\exists c\in(0,1]$ such that $\lim_{x\to\infty}f(x)/x=c$.} Hence, for $k\notin\mathcal I_n$, the hypothesis on $X_{k,n}$ can be relaxed: any distribution function decaying faster than $\cR\cV_{-\beta}$ is allowed.
\end{enumerate}

\medskip
\begin{proposition}\label{prop:RVI}
Let $((X_{k,n})_{k=1}^n)_{n\in\N}$ be a triangular array of nonnegative, row-wise independent random variables satisfying Assumption~\ref{assumptionRVI2} with $s_n$ such that $A_n(s_n)\to0$ as $n\to\infty$. 
Then, for $x_n=1/s_n$ we have
\[
\lim_{n\to\infty}\sup_{x\geq x_n}\left|\frac{\P[S_n-b_n>x]}{L_n(x)\,x^{-\beta}}-1  \right|= 0.
\]
\end{proposition}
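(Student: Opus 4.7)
The goal is to bring the proposition into the scope of Theorem~\ref{thm:LDP} by showing that, uniformly for $s\le\lambda s_n$, the Laplace-Stieltjes transform $\hat F_n(s)$ of $Z_n\coloneqq S_n-b_n$ admits the representation
$\hat F_n(s)=1-\Gamma(1-\beta)L_n(1/s)s^\beta+\cE_n(s)$ with $\sup_{s\le\lambda s_n}|\cE_n(s)|/(L_n(1/s)s^\beta)\to 0$. Row-wise independence gives the factorisation $\hat F_n(s)=\prod_{k=1}^n\bigl(1+a_{k,n}(s)\bigr)$, so the heart of the argument is to compare this product with $1+A_n(s)$ and then invoke Assumption~\ref{assumptionRVI2} to identify the leading term.

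First, I would record that all $a_{k,n}(s)$ share a common sign for every $s\ge 0$ small enough: non-positive when $\beta\in(0,1)$, since $X_{k,n}\ge 0$ forces $\hat F^{(k,n)}(s)\le 1$; non-negative when $\beta\in(1,2)$, since Jensen's inequality applied to the centred variable $X_{k,n}-\E[X_{k,n}]$ yields $\hat F^{(k,n)}(s)=\E[\exp{-s(X_{k,n}-\E[X_{k,n}])}]\ge\exp{0}=1$. Consequently $\max_{1\le k\le n}|a_{k,n}(s)|\le\sum_{k=1}^n|a_{k,n}(s)|=|A_n(s)|$. Assumption~\ref{assumptionRVI2} combined with the uniform slow variation of $L_n$ implies $|A_n(\lambda s_n)|\sim\lambda^\beta|A_n(s_n)|\to 0$ for every fixed $\lambda>0$, so both $A_n(s)$ and $\max_k|a_{k,n}(s)|$ tend to zero uniformly on $\{s\le\lambda s_n\}$.

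The analytic step is a Taylor expansion of the logarithm. For $n$ large enough, $|a_{k,n}(s)|\le 1/2$ uniformly in $k$ and in $s\le\lambda s_n$, and the elementary inequality $|\log(1+u)-u|\le u^2$ on $[-1/2,1/2]$ gives
$\bigl|\log\hat F_n(s)-A_n(s)\bigr|\le\sum_{k=1}^n a_{k,n}(s)^2\le\max_k|a_{k,n}(s)|\cdot|A_n(s)|=o(|A_n(s)|)$
uniformly on $\{s\le\lambda s_n\}$. Since $A_n(s)\to 0$ uniformly as well, exponentiation produces $\hat F_n(s)-1=A_n(s)(1+o(1))$, and Assumption~\ref{assumptionRVI2} converts this into the target form for $\hat F_n(s)$ with an error of order $o(L_n(1/s)s^\beta)$. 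Condition~\eqref{eq:controlE} is thereby verified and Theorem~\ref{thm:LDP} delivers the announced precise large deviation estimate.

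The delicate point is the uniform, rather than pointwise, control of the remainder on the full range $s\le\lambda s_n$. Without the observation that the $a_{k,n}(s)$ all have the same sign, a Lindeberg-type negligibility condition on $\max_k|a_{k,n}(s)|$ would have to be imposed and verified separately; the common-sign structure instead trivialises this step and reduces the whole argument to the single scalar control on $A_n$ supplied by Assumption~\ref{assumptionRVI2} together with the hypothesis $A_n(s_n)\to 0$.
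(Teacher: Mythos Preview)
Your proof is correct and follows essentially the same strategy as the paper: both reduce the statement to Theorem~\ref{thm:LDP} by showing $\hat F_n(s)-1=A_n(s)(1+o(1))$ uniformly on $\{s\le\lambda s_n\}$, and both hinge on the common-sign observation to get $\max_k|a_{k,n}(s)|\le|A_n(s)|\to 0$. The only difference is the elementary device used for the product-to-sum comparison: the paper sandwiches $\hat F_n(s)-1-A_n(s)$ between $0$ (Bernoulli/Weierstrass product inequality) and $e^{A_n(s)}-1-A_n(s)$, whereas you expand $\log(1+u)$ and exponentiate; both yield an $O(A_n(s)^2)$ remainder and are interchangeable here.
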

\begin{proof} We will apply Theorem~\ref{thm:LDP} to the sequence $Z_n\coloneqq S_n-b_n$ with $\hat{F}_n(s)=\prod_{k=1}^n \left(1+a_{k,n}(s)\right)$. 
Observe that, according to the convention in~\eqref{eq:b}, all $a_{k,n}(s)$ have the same sign.
Thus, using Assumption~\ref{assumptionRVI2}, for all $\lambda>0$, for any $(\tilde s_n)_{n\in \N}$ such that $\tilde s_n\leq \lambda s_n$ we have $|a_{k,n}(\tilde s_n)|<1$  eventually in $n$.  
We can apply  Bernoulli's inequality so that, for all $n$ large enough,  we have
\begin{equation}\label{eq:prop3}
0\le  \hat{F}_n(\tilde s_n) -1 - A_n(\tilde s_n)\le \exp{A_n(\tilde s_n)}-1 -A_n(\tilde s_n).
\end{equation}
\color{black}
From~\eqref{eq:prop3} and Assumption~\ref{assumptionRVI2},  we directly deduce (see Lemma~\ref{lem:eqPointUnif}) that, $\forall\, \lambda >0$, 
\begin{align*}
\lim_{n\to\infty}&\sup_{s\in(0,\lambda s_n]}\frac{|\mathcal E_n(s)|}{|\Gamma(1-\beta)|L_n(1/s)s^{\beta}} 
= \lim_{n\to\infty} \sup_{s\in(0,\lambda s_n]}\left|\frac{\mathcal E_n(s)}{A_n(s)}\right|\\
& =\lim_{n\to\infty}\sup_{s\in(0,\lambda s_n]}\left|\frac{\hat F_n(s)-1+ \Gamma(1-\beta)L_n(1/s)s^{\beta} }{ A_n(s)}\right|  \\
&\leq\lim_{n\to\infty}\sup_{s\in(0,\lambda s_n]}\left(\left|\frac{\hat F_n(s)-1 }{A_n(s)}-1\right|  + \left| 1+\frac{\Gamma(1-\beta)L_n(1/s)s^{\beta}}{A_n(s)}\right| \right)=0.
\end{align*}
\end{proof}

\begin{remark}
Proposition~\ref{prop:RVI} remains valid for $\R$-valued, row-wise independent random variables with a finite bilateral Laplace--Stieltjes transform when $\beta\in(1,2)$; see Theorem~\ref{thm:LDP}. No further mention of this observation will be made in the upcoming applications.
\end{remark}

\medskip\noindent
%%%%%%%%%%%%% -----------------  IID
As a first corollary, we immediately derive the well-established result for the i.i.d. subcase (see~\cite[Thm.~3.3]{ClineHsing}):
\medskip
\begin{corollary}\label{cor:IID} 
Let $S_n\coloneqq \sum_{k=1}^nX_k$ with $(X_k)_{k\in\N}$ an i.i.d. sequence of $\R^+$-valued random variables in the class $\cR\cV_{-\beta}$ with $\beta \in \brange$.
Then, for $(x_n)_{n\in\N}$ such that $x_n\to\infty$ and $ n\,\P[X_1>x_n]\to 0$ as $n\to\infty$ we have
\[
\lim_{n\to\infty}\sup_{x\geq x_n}\left|\frac{\P[S_n-b_n>x]}{n\,\P[X_1>x]}-1  \right|= 0.
\]
\end{corollary}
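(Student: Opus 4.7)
The plan is to derive Corollary~\ref{cor:IID} as the direct specialisation of Proposition~\ref{prop:RVI} to the i.i.d.\ triangular array $X_{k,n}\coloneqq X_k$. Under this choice one has $\beta_{k,n}\equiv\beta$ and $\mathcal I_n=\{1,\dots,n\}$, so Assumption~\ref{assumptionRVI2} needs to be checked only for a single, $n$-independent Laplace--Stieltjes transform, and the compatibility requirement $A_n(s_n)\to 0$ will reduce precisely to the corollary's hypothesis $n\,\P[X_1>x_n]\to 0$ upon choosing $s_n\coloneqq 1/x_n$.

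First, writing $\bar F(x)=x^{-\beta}\ell(x)$ with $\ell$ slowly varying at infinity, I would apply the classical (single-transform) Karamata Tauberian theorem to the tail $\bar F$ to obtain, as $s\to 0^+$,
\[
\hat F(s)-1=-\Gamma(1-\beta)\,\ell(1/s)\,s^{\beta}+o\bigl(\ell(1/s)\,s^{\beta}\bigr),
\]
directly when $\beta\in(0,1)$, and after centering when $\beta\in(1,2)$: there one considers $\E\bigl[e^{-s(X_1-\mu)}\bigr]=e^{s\mu}\hat F(s)$ and expands $e^{s\mu}=1+s\mu+O(s^2)$ to absorb the linear mean term, using $O(s^2)=o(\ell(1/s)s^{\beta})$ since $\beta<2$. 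In either case, denoting the common expansion by $a(s)$, by independence $A_n(s)=n\,a(s)$. Set $L_n(x)\coloneqq n\,\ell(x)$: by Remark~\ref{rmk:SV} this separable family belongs to $\mathcal L^+_{\text{sep}}$.

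To verify Assumption~\ref{assumptionRVI2}, observe that the ratio
\[
\frac{A_n(s)}{\Gamma(1-\beta)\,L_n(1/s)\,s^{\beta}}=\frac{a(s)}{\Gamma(1-\beta)\,\ell(1/s)\,s^{\beta}}
\]
is independent of $n$ and, by the expansion above, converges to $-1$ as $s\to 0^+$. Hence its absolute value tends to $1$, and since $\lambda s_n\to 0$ for every $\lambda>0$, the supremum over $s\le\lambda s_n$ collapses uniformly onto $1$. For the compatibility condition, $A_n(s_n)\sim -\Gamma(1-\beta)\,n\,\bar F(x_n)\to 0$ precisely by the hypothesis $n\,\P[X_1>x_n]\to 0$. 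Applying Proposition~\ref{prop:RVI} then yields
\[
\sup_{x\ge x_n}\left|\frac{\P[S_n-b_n>x]}{n\,\ell(x)\,x^{-\beta}}-1\right|\longrightarrow 0,
\]
and since $\ell(x)\,x^{-\beta}=\bar F(x)=\P[X_1>x]$, this is the desired statement.

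I do not expect a genuine obstacle in this reduction. The only point requiring mild care is the Karamata expansion in the regime $\beta\in(1,2)$, where the tail regular variation produces a correction at second order beyond the finite mean; this is a standard single-variable Tauberian result. Everything else is a direct substitution into Proposition~\ref{prop:RVI}.
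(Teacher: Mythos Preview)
Your reduction is correct and matches the paper's own proof essentially line for line: both specialize Proposition~\ref{prop:RVI} to the i.i.d.\ array, invoke the standard Tauberian expansion $a(s)=-\Gamma(1-\beta)\ell(1/s)s^{\beta}+o(\ell(1/s)s^{\beta})$ (the paper cites Ibragimov's Thm.~2.6.1 and 2.6.5 for this), set $L_n(x)=n\,\ell(x)\in\cL^+_{\text{sep}}$, and observe that the large deviation condition $A_n(s_n)\to 0$ is exactly $n\,\P[X_1>x_n]\to 0$ with $s_n=1/x_n$. Your explicit handling of the centering for $\beta\in(1,2)$ is a welcome elaboration of what the paper leaves implicit.
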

\medskip
\begin{proof} By assumption, $\P(X_1>x) \sim \ell(x)x^{-\beta}$ for some positive slowly varying function $\ell(x)$, and we have a common  $a(s)=-\Gamma(1-\beta)\ell(1/s)s^\beta+o(\ell(1/s)s^\beta)$ as $s\searrow 0$; refer to~\cite[Thm.~2.6.1 and Thm.~2.6.5]{Ibragimov}. The hypotheses of Proposition~\ref{prop:RVI} are  thus readily satisfied with $L_n(1/s)=n\ell(1/s)$ and $s_n$ such that $n\ell(1/s_n)s_n^{\beta} \to 0$, which is invariant under the scaling transformation $s_n\mapsto \lambda s_n$ for every $\lambda>0$.
\end{proof}
Observe that the large deviation condition $n\,\P[X_1>x_n]\xrightarrow{n\to\infty}0$
 is sharp, being at the boundary with the ``natural'' fluctuations of the GCLT: due to spectral positivity, it coincides with the defining condition of the norming sequence in the convergence~\eqref{eq:rareEvent}.

The process $(S_n)_{n\in\N}$ in Corollary \ref{cor:IID} is the rigorous formulation of what is know by physicist as a L\'evy flight~\cite{Mandelbrot82}: a random walk with i.i.d. regularly varying increments with infinite variance (and possibly infinite mean). This is the simplest application of random walks with regularly varying increments in the literature. 
We now turn to more complex situations.

\medskip
%%%%%%%%%%%%% ----------------- CORRELATED
Historically, dependent regularly varying sequences represent a challenging and non-trivial branch of the large deviation theory. In the field of time series analysis for heavy-tailed processes, these kind of models appeared in the form of moving average processes and autoregressive models~\cite[Ch.~7]{EKM}. Consider the usual sequence $(X_n)_{n\in \N}$ of i.i.d. regularly varying random variables, that are commonly referred  as \emph{innovations} or \emph{noise}, and two sets of parameters $\theta_1,\dots,\theta_q$ and $\phi_1,\dots,\phi_p$. We can construct a causal linear process 
\[
S_1\coloneqq X_1,\quad S_2\coloneqq X_2+\theta_1 X_1, \quad\dots \quad S_n\coloneqq X_n+\theta_1X_{n-1}+\dots+\theta_qX_{n-q},\quad n> q,
\]
called \emph{moving average process of order $q$} and denoted by MA($q)$, which represents a $q$-dependent regularly varying sequence.
In the same spirit, the \emph{autoregressive model of order $p$}, denoted as AR($p$), is obtained with a recursive equation
\[
S_n\coloneqq \sum_{i=1}^p \phi_i S_{n-i}+X_n,\quad n\in\N.
\]
In the former, the correlation is given by a direct finite-range propagation of the noise term $X_n$ to $q$ future values in the time series $(S_n)_{n\in\N}$, whereas in the latter $X_{n-1}$ produces an indirect effect on $S_n$ by appearing in the recurrence equation for $S_{n-1}$, and so by iteration it causes the presence of an infinite memory. 

While large deviation estimates for the MA($q$) process are known (refer to~\cite{Mikosch2013}) and can be easily recovered from Proposition~\ref{prop:RVI} (see the discussion after Corollary~\ref{cor:InonID}), less is known for the AR($p$) model (see~\cite{Mikosch2000}). We will now deal with a similar setting built on a memory kernel originally introduced by the physics community~\cite{Chechkin2009, Meerschaert2009, Holl2021}, where our method can be directly applied.
Let  $(S_n)_{n\in\N}$ denote the partial sums whose correlated increments $(Y_k)_{k\in\N}$ are themselves partial sums of an i.i.d. sequence $(X_j)_{j\in\N}$ of $\R^+$-valued random variables in the class $\cR\cV_{-\beta}$ with $\beta \in \brange$, weighted by some positive memory kernel: 
\begin{equation}\label{eq:corrSum}
S_n\coloneqq\sum_{k=1}^n Y_k\quad\text{with}\quad Y_k\coloneqq\sum_{j=1}^k m_{k-j} X_j\quad\text{and}\quad m_0= 1,\,m_{k-j}>0.
\end{equation}
For the sake of simplicity, restrict $(X_j)_{j\in\N}$ to the \emph{normal} domain of attraction of a spectrally-positive $\beta$-stable distribution: the introduction of a slowly varying function can be handled in the same way.  Moreover, we use the same notation as before: $b_n\coloneqq 0$ if $\beta \in (0,1)$  and $b_n\coloneqq  \E[S_n]$ if $\beta \in (1,2)$.
The following corollary follows again directly from Proposition~\ref{prop:RVI}:

\medskip
\begin{corollary}\label{cor:InonID}
Let $(S_n)_{n\in \N}$ be the partial sums of correlated random variables as  in~\eqref{eq:corrSum}, and define $L_n\coloneqq \sum_{k=1}^n \left( \sum_{i=0}^{n-k}m_i\right)^\beta$. Then, for $(x_n)_{n\in\N}$ such that $x_n\to \infty$ and $ L_n\P(X_1> x_n)\to 0$ as $n \to\infty$ we have
\begin{equation}
\lim_{n\to\infty}\sup_{x\geq x_n}\left|\frac{\P [S_n -b_n > x]}{ L_n\,\P[X_1> x]}-1  \right|= 0.
\end{equation}
\end{corollary}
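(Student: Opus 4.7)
The plan is to apply Proposition~\ref{prop:RVI} to the triangular array obtained by swapping the order of summation in~\eqref{eq:corrSum}. This rewriting gives
\[
S_n = \sum_{k=1}^n \sum_{j=1}^k m_{k-j} X_j = \sum_{j=1}^n c_{j,n} X_j, \qquad c_{j,n} \coloneqq \sum_{i=0}^{n-j} m_i,
\]
so $(X_{j,n}' \coloneqq c_{j,n} X_j)_{j=1}^n$ is a row-wise independent, non-negative triangular array, each element of which belongs to $\cR\cV_{-\beta}$. Since $m_0 = 1$ and $m_i > 0$, $c_{j,n}$ is non-increasing in $j$, hence $1 = c_{n,n} \leq c_{j,n} \leq c_{1,n}$ and $L_n \geq c_{1,n}^\beta$ by taking $j=1$ in the definition of $L_n$.

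I would then expand the Laplace-Stieltjes transform. Because $(X_j)$ lies in the normal domain of attraction of a spectrally positive $\beta$-stable law, $\P[X_1 > x] \sim C x^{-\beta}$ for some $C > 0$, and by the Karamata Abelian theorem the Laplace-Stieltjes transform of $X_1$ (respectively, of $X_1 - \E[X_1]$ when $\beta \in (1,2)$, the centering adding only an $O(s^2) = o(s^\beta)$ correction) satisfies
\[
\hat{F}(s) - 1 = -C\,\Gamma(1-\beta)\, s^\beta + s^\beta\, \epsilon(s), \qquad \lim_{s \to 0^+} \epsilon(s) = 0.
\]
Evaluating at $c_{j,n} s$ gives $a_{j,n}(s) = -C\Gamma(1-\beta) c_{j,n}^\beta s^\beta + c_{j,n}^\beta s^\beta\,\epsilon(c_{j,n} s)$, and summing in $j$,
\[
A_n(s) = -C\,\Gamma(1-\beta)\, L_n\, s^\beta + s^\beta \sum_{j=1}^n c_{j,n}^\beta \, \epsilon(c_{j,n} s).
\]
The natural candidate for the slowly varying family in Assumption~\ref{assumptionRVI2} is the constant-in-$x$ separable choice $L_n(1/s) \equiv C L_n$, which clearly lies in $\cL^+_{\text{sep}}$.

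To verify Assumption~\ref{assumptionRVI2}, I would show that the error term above is $o(L_n s^\beta)$ uniformly on $s \leq \lambda s_n$. The hypothesis $L_n \P[X_1 > x_n] \to 0$ is equivalent to $L_n s_n^\beta \to 0$; combined with $L_n \geq c_{1,n}^\beta$, it forces $c_{1,n} s_n \to 0$. Hence, for any $\lambda > 0$,
\[
\left| \sum_{j=1}^n c_{j,n}^\beta \, \epsilon(c_{j,n} s) \right| \leq L_n \sup_{u \leq \lambda c_{1,n} s_n} |\epsilon(u)| = o(L_n)
\]
uniformly on $s \leq \lambda s_n$, and in particular $A_n(s_n) = O(L_n s_n^\beta) \to 0$. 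Proposition~\ref{prop:RVI} then yields $\sup_{x \geq x_n} |\P[S_n - b_n > x]/(C L_n x^{-\beta}) - 1| \to 0$, which upgrades to the stated form of the corollary because $\P[X_1 > x] \sim C x^{-\beta}$ uniformly on $[x_n, \infty)$ (as $x_n \to \infty$).

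The main obstacle is precisely this passage from the pointwise Tauberian asymptotics of the single transform $\hat{F}$ to a uniform estimate for the whole family $A_n$: each individual $a_{j,n}(s)$ carries an error of order $c_{j,n}^\beta s^\beta \epsilon(c_{j,n} s)$ in which the weight $c_{j,n}$ may diverge with $n$, so the pointwise information $\epsilon(s) \to 0$ as $s \to 0^+$ is not enough on its own. The large-deviation hypothesis $L_n \P[X_1 > x_n] \to 0$ is exactly what is needed to force the maximal weight $c_{1,n}$ to satisfy $c_{1,n} s_n \to 0$, thereby collapsing $n$ independent pointwise estimates into a single uniform one.
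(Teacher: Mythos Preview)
Your proof is correct and follows the same route as the paper: rearrange $S_n$ as the row-sum of the triangular array $(c_{j,n}X_j)_{j=1}^n$ and verify Assumption~\ref{assumptionRVI2} with the separable family $L_n(1/s)=CL_n$, then invoke Proposition~\ref{prop:RVI}. Your explicit control of the remainder via $L_n\ge c_{1,n}^\beta\Rightarrow c_{1,n}s_n\to 0$ is in fact more careful than the paper's own write-up, which simply records $a_{k,n}(s)=w_{n-k}^{\beta}a(s)$ and $A_n(s)=L_na(s)$ without isolating the $o(s^\beta)$ error.
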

\medskip\noindent
\begin{proof}
Firstly observe that, rearranging the summands,  $S_n$ can be recast as the partial sum of the row-wise independent random variables  of the triangular array $ \tilde{X}_{k,n}\coloneqq w_{n-k} X_k$, where $ w_{n-k}\coloneqq\sum_{i=0}^{n-k}m_i$.
Denote by $\hat F^{(k,n)}(s) = 1 + a_{k,n}(s)$ the Laplace--Stieltjes transform of the distribution of the possibly centered $\tilde{X}_{k,n}$; see~\eqref{eq:b} at the beginning of this section for the precise centering convention. Therefore we can write $a_{k,n}(s)=w_{n-k}^{\beta}a(s)$, with $a(s)=-\Gamma(1-\beta)s^{\beta}+o(s^\beta)$ as  $s\searrow 0$ coming from the transform $\hat{F}(s)=1+a(s)$ of the \textcolor{blue} centered $X_1$. The assumptions of Proposition~\ref{prop:RVI} are  thus readily satisfied by noticing that $A_n(s)=L_na(s)$ with $L_n=\sum_{k=1}^n w_{k-n}^{\beta}$, and taking $s_n$ such that $L_n s_n^{\beta} \to 0$ as $n\to\infty$.
\end{proof}

\medskip
In order to appreciate the generality of Corollary~\ref{cor:InonID}, consider two iconic types of memory functions: the exponential kernel, that represents a short-memory regime with weak correlation, and the algebraic counterpart with long-memory effects. The first one is obtained by setting
\[
m_{k}= \nu^{k} \quad\text{for some}\quad 0<\nu<1.
\]
It is immediate that the following asymptotic holds $w_{n-k}\sim (1-\nu)^{-1}$ as $n\to\infty$ for fixed $k$, which is responsible for an ``almost'' i.i.d. situation: the absence of long-range correlations provides the usual scaling $ L_n\asymp n$ as $n\to \infty$. 
The exponential memory kernel clearly satisfies the recursive equation of an AR($1$) model with $\phi_1\equiv \nu$.
Similarly, in a MA($q$)-process, the approximately i.i.d. condition~\eqref{eq:almostIID} is guaranteed by the even simpler fact that $w_{n-k}\equiv1+\theta_1+\dots+\theta_q$ up to $k=n-q-1$.

\noindent
This observation fails in the presence of stronger correlations, as for instance with the choice
\[
m_{k}= (k+1)^{-\nu}\quad\text{with}\quad 0<\nu<1,
\]
for which $w_{n-k}\sim\frac {n^{1-\nu}}{1-\nu}$ as $n\to\infty$ for fixed $k$, that leads to a superlinear growth $L_n\asymp n^{1+\beta(1-\nu)}$ and to the failure of~\eqref{eq:almostIID}.

\subsection{Sums of a random number of independent increments}
Let $(\mathrm{N}(t))_{t\geq 0}$ denote a stochastic process with values that are nonnegative and integer, independent of the triangular array $((X_{k,n})_{k=1}^n)_{n\in\N}$, and consider the random sums 
\begin{equation}\label{eq:defRS}
S_0\coloneqq 0,\qquad S_{\mathrm{N}(t)}\coloneqq \sum_{k=1}^{\mathrm{N}(t)}X_{k, \mathrm{N}(t)}\quad\text{for}\quad \mathrm{N}(t)\geq 1.
\end{equation}
In several situations of interest a sum of random variables is stopped at  a random time. In particular, one of the firstly studied setting is where the random number of summands  $\mathrm{N}(t)$ is given by a renewal counting process. 
In physics, for instance, $\mathrm{N}(t)$ corresponds to the number of i.i.d. waiting times $(T_k)_{k\in\N}$ occurred up to time $t$ in a (wait-then-jump) Continuous-Time Random Walk model (CTRW)~\cite{Montroll65}, that can be addressed as the random sum associated with a L\'evy flight. 
In finance and insurance, instead, $\mathrm{N}(t)$ is often the  number of claims in the interval $[0,t]$ in an actuarial risk process. 

Large deviations for sums of a random number of independent increments have been derived -- as far as we know~\cite{Kluppelberg97, Tang2001, Ng2004, BollLit2} -- under the fundamental assumption that the random time concentrates with high probability around its expectation $\E[\mathrm{N}(t)]<\infty$, see~\eqref{eq:convProb} below. Indeed, leveraging on such assumption, the random sum is asymptotically dominated by the sum of a fixed number $\E[\mathrm{N}(t)]$ of increments. Our method does not rely on this splitting, allowing to deal with processes for which~\eqref{eq:convProb} fails or even $\E[\mathrm{N}(t)]=\infty$, as we will show below. In particular, we will discuss renewal and compound renewal risk models, highlighting cases where our technique leads to novel results beyond those available in the existing literature.

\medskip
We intend to derive large deviation results for this class of processes via the uniform Tauberian Theorem~\ref{thm:tauberian}. To lighten the discussion,  we will firstly restrict to  the  case of  i.i.d. random variables $(X_k)_{k\in\N}$.  
We will comment on how to generalise the forthcoming analyses to the more general setting in Appendix~\ref{app:RS-triangular}. Note that limit theorems for weighted sums as in Corollary~\ref{cor:InonID} but with a random number of increments have been derived in~\cite[Thm.~2.4]{Gut2011}.

Suppose  that the generating function of the nonnegative, $\N$-valued process $(\mathrm{N}(t))_{t\geq 0}$ is well defined for $z\in[0,z_t)$ for some $z_t\geq 1$ and takes the  form 
$$
\phi_{\mathrm{N}(t)}(z)= \sum_{n=0}^{\infty}\P( \mathrm{N}(t)=n) z^n= 1 + \ell_t(1-z) (1-z)^{\gamma} + o((1-z)^{\gamma}) \quad \textrm{as} \quad z \to 1^-, 
$$
with constant $\gamma>0$ for all $t\geq 0$ and $\ell_t(1-z)$ slowly varying when $z \to 1^-$. 
The Laplace--Stieltjes transform of the centred sum $Z_t\coloneqq S_{\mathrm{N}(t)}-b_{\mathrm{N}(t)}$ (with $b_{\mathrm{N}(t)}$ defined in~\eqref{eq:b}), in a proper right neighborhood of the origin $s\in[0,s_t)$, can be thus written as
\begin{equation}\label{eq:RS}
\hat F_t(s)=\phi_{\mathrm{N}(t)}(\hat F(s)) = 1  + L_t(1/s)\cdot s^{\gamma \beta} + \cE_t(s),
\end{equation} 
where  the slowly varying function at infinity $L_t$ and the sub-leading term $\cE_t(s)$ take contributions both from the  Laplace--Stieltjes transform $\hat F(s)$ of the random variables $(X_k)_{k\in\N}$\,---\,with  $X_1\in \cR\cV_{-\beta}$ centered if $\beta\in(1,2)$\,---\,and the generating function of the $\N$-valued process $\phi_{\mathrm{N}(t)}(z)$. This is the effect of the interplay between the two sources of randomness: in the general setting it is not possible to  disentangle  these contributions  in order to provide proper assumptions  on the spatial and temporal patterns separately.
In order to apply  Theorem~\ref{thm:LDP}  to the random variables $Z_t$ we thus need a joint control over  both terms.
In the following, we discuss  several settings where this fine control on the Laplace--Stieltjes transform can be  achieved.

For the reader's benefit, we recall that by assumption $\hat F(s)=1+a(s)$ with $a(s)=-\Gamma(1-\beta)\ell(1/s)s^\beta+o(\ell(1/s)s^\beta)$ as $s\searrow 0$ and $\P(X_1>x) \sim \ell(x)x^{-\beta}$ as $x\to\infty$, given $\beta \in \brange$ and $\ell(x)$ a positive slowly varying function at infinity. In particular, $a(s)$ is eventually positive when $\beta>1$.

\subsubsection{A couple of explicit examples}
As instructive examples, we start with two stochastic processes $(\mathrm{N}(t))_{t\geq 0}$ whose generating function can be written in closed form, thus allowing the derivation of their large deviation principles via a direct application of Theorem~\ref{thm:LDP}.

\medskip
\begin{example}\label{ex:Poisson}
Let $(T_k)_{k\in\N}$ be a sequence of exponentially distributed i.i.d. waiting times with common tail distribution $e^{-\rho t}$ for some $\rho>0$. 
Then the counting process $\mathrm{N}(t)\coloneqq \sup\{n\geq 0\,:\, T_1+\dots+T_n\leq t\}$ has a Poisson distribution with $\E[\mathrm{N}(t)]=\rho \,t$. From $\phi_{{\mathrm N}(t)}(z)=e^{-\rho \,t(1-z)}$ we directly get that the sub-leading term $\cE_t(s)$ in~\eqref{eq:RS} is actually negligible in every region $s\leq s_t$ with  $(s_t)_{t\geq 0}$ such that $ a(s_t) \cdot t    \to 0$ as $t\to\infty$, that is
\[
\lim_{t\to\infty}\sup_{s\in(0,\lambda s_t]}\left|
\frac{\hat F_t(s) -1}{ \rho\,t\,a(s)}-1 \right| =
\lim_{t\to\infty}\sup_{s\in(0,\lambda s_t]}\left|
\frac{e^{\rho\, t   a(s)} -1}{ \rho \, t \,a(s)}-1 \right| =0 
\]
for all $\lambda>0$. 
We can therefore immediately conclude via Theorem~\ref{thm:LDP} that for $x_t= 1/s_t$
\[
\lim_{t\to\infty}\sup_{x\geq  x_t} \left|\frac{\P[S_{\mathrm{N(t)}}-b_{\mathrm{N(t)}}>x]}{\rho\,  t \, \P [X_1>x]}-1  \right|=0.
\]
\end{example}

\medskip\noindent

\medskip
\begin{example}\label{ex:exp}
 Let $({\mathrm N}(t))_{t\geq 0}$ be a process with geometric distribution parametrized by $1/\rho(t)\in(0,1)$, where $\rho(t)<\infty$ for every $t<\infty$ and $\rho(t)\xrightarrow{t\to\infty}\infty$. 
From the corresponding generating function $\phi_{{\mathrm N}(t)}=z/(\rho(t)-[\rho(t)-1]z)$, we obtain that~\eqref{eq:RS} becomes
\[
\lim_{t\to\infty}\sup_{s\in(0,\lambda s_t]}\left|
\frac{\hat F_t(s) -1}{ \rho(t)\,a(s)}-1 \right| =\lim_{t\to\infty}\sup_{s\in(0,\lambda s_t]}\left| \frac{(\rho(t)-1)a(s)}{1-(\rho(t)-1)a(s)}\right| =0
\]
for every $\lambda>0$ and $(s_t)_{t\geq 0}$ such that $a (s_t)\cdot \rho(t)\to 0$ as $t\to\infty$, and by applying Theorem~\ref{thm:LDP} with $x_t= 1/s_t$
\[
\lim_{t\to\infty}\sup_{x\geq  x_t} \left|\frac{\P[S_{\mathrm{N(t)}}-b_{\mathrm{N(t)}}>x]}{\rho(t) \, \P [X_1>x]}-1  \right|=0.
\]
\end{example}

\medskip
Example~\ref{ex:Poisson} is the classical compound-Poisson risk model, also known as Cram\'er-Lundberg model: it belongs to a well-studied class  of randomly stopped sums for which large deviation results  are already discussed in the literature (see~\cite{Tang2001, Ng2004}).
In contrast, Example~\ref{ex:exp} lies outside the range of cases typically investigated in the literature, where concentration of N(t) around its expected value is a key requirement (see~\eqref{eq:convProb} below). Nevertheless, our approach allows for an easy derivation of its large deviation principle. See also Example~\ref{ex:Renewal} for a broader and significative class of models with this feature that are handled by our method.

\subsubsection{Processes $(\mathrm{N}(t))_{t\geq 0}$ with finite mean}
Consider the class of processes with $\E[\mathrm{N}(t)]\in(0,\infty)$ for every finite $t$.  For this group, Theorem~\ref{thm:LDP} can be recast in the following form:

\medskip
\begin{corollary}\label{cor:RSFM}
Let $(\mathrm{N}(t))_{t\geq0}$ be a nonnegative, $\N$-valued process and $(S_{\mathrm{N}(t)})_{t\geq0}$ the random sums in~\eqref{eq:defRS}. Assume that $\E[\mathrm{N}(t)]<\infty$ for every $t<\infty$, with $E[\mathrm{N}(t)]\to \infty$ as $t \to\infty$, and denote by $(s_t)_{t\geq 0}$, with $s_t\to 0$ as $t\to\infty$, a family such that for all $\lambda>0$
\begin{equation}\label{eq:derivRS}
\lim_{t\to\infty}\left| \frac{\phi'_{\mathrm{N}(t)}(z)\big|_{1+a(\lambda s_t)} }{\E[\mathrm{N}(t)]}-1\right|=0.
\end{equation}
Then, for $x_t=1/s_t$ we have
\begin{equation}\label{eq:thesisFM}
\lim_{t\to \infty}\sup_{x\geq x_t}\left|\frac{\P[S_{\mathrm{N}(t)}-b_{\mathrm{N}(t)}>x]}{\E[\mathrm{N}(t)]\P[X_1>x]}-1 \right|=0.
\end{equation}
\end{corollary}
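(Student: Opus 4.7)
The strategy is to verify the hypotheses of Theorem~\ref{thm:LDP} for $Z_t := S_{N(t)} - b_{N(t)}$ and then read off the conclusion. By independence of $(N(t))_{t\geq 0}$ and $(X_k)_{k\in\N}$ together with the i.i.d. assumption, the Laplace--Stieltjes transform of $Z_t$ factorises through the probability generating function:
$$\hat F_t(s) = \phi_{N(t)}(\hat F(s)) = \phi_{N(t)}\bigl(1 + a(s)\bigr),$$
where $\hat F(s) = 1 + a(s)$ is the transform of the (eventually centred) $X_1$ and, by the scalar Karamata Tauberian theorem applied to $\bar F \in \cR\cV_{-\beta}$, $a(s) = -\Gamma(1-\beta)\,\ell(1/s)\,s^\beta + r(s)$ with $r(s) = o(\ell(1/s)s^\beta)$ as $s \searrow 0$.

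The first key step is a mean-value argument on $\phi_{N(t)}$: there exists $\xi_s$ between $1$ and $1 + a(s)$ such that
$$\hat F_t(s) - 1 = a(s)\,\phi'_{N(t)}(\xi_s).$$
Because $\phi'_{N(t)}$ is a power series with non-negative coefficients $n\,\P[N(t)=n]$, it is non-decreasing on $[0, z_t)$. Invoking the monotone equivalents theorem I may assume $s\mapsto |a(s)|$ is non-decreasing near the origin, so for every $s \leq \lambda s_t$ the point $\xi_s$ lies in the closed interval with endpoints $1$ and $1 + a(\lambda s_t)$. Monotonicity of $\phi'_{N(t)}$ hence sandwiches $\phi'_{N(t)}(\xi_s)$ between $\phi'_{N(t)}(1) = \E[N(t)]$ and $\phi'_{N(t)}(1+a(\lambda s_t))$, and hypothesis~\eqref{eq:derivRS} upgrades this sandwich into
$$\lim_{t\to\infty} \sup_{s \leq \lambda s_t}\left|\frac{\phi'_{N(t)}(\xi_s)}{\E[N(t)]} - 1\right| = 0.$$

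The second step identifies the ingredients of Theorem~\ref{thm:LDP}. I set $L_t(x) := \E[N(t)]\,\ell(x)$, which is a separable slowly varying family and hence belongs to $\cL^+_{\text{sep}}$ by Remark~\ref{rmk:SV}, and define $\cE_t(s) := \hat F_t(s) - 1 + \Gamma(1-\beta)\,L_t(1/s)\,s^\beta$. Combining the mean-value identity with the scalar expansion of $a(s)$ and dividing by $L_t(1/s)s^\beta = \E[N(t)]\ell(1/s)s^\beta$ yields
$$\frac{\cE_t(s)}{L_t(1/s)\,s^\beta} = \left[-\Gamma(1-\beta) + \frac{r(s)}{\ell(1/s)\,s^\beta}\right]\bigl(1 + o_t(1)\bigr) + \Gamma(1-\beta),$$
in which the $o_t(1)$ is uniform in $s \leq \lambda s_t$ by the first step. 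Since $s \leq \lambda s_t \to 0$, the bracketed term equals $-\Gamma(1-\beta) + o(1)$ uniformly on that range, so the whole right-hand side tends to $0$ uniformly in $s \leq \lambda s_t$ as $t\to\infty$---which is precisely the uniform control~\eqref{eq:controlE}.

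The nonnegativity of $Z_t$ required by Theorem~\ref{thm:LDP} when $\beta \in (0,1)$ is automatic, since in that range $b_{N(t)} = 0$ and the $X_k$ are nonnegative. All hypotheses are therefore in place and Theorem~\ref{thm:LDP} delivers~\eqref{eq:thesisFM}. The only delicate ingredient of this plan is the monotonicity-plus-sandwich step of the second paragraph, which promotes the one-point assumption~\eqref{eq:derivRS} to a uniform statement in $s$ over the shrinking interval $s \leq \lambda s_t$; everything else is routine bookkeeping with the scalar Karamata expansion of $a(s)$.
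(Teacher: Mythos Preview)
Your proof is correct and follows essentially the same route as the paper's: both apply the mean value theorem to $\phi_{N(t)}(1+a(s))-\phi_{N(t)}(1)$, use the monotonicity of $\phi'_{N(t)}$ (i.e.\ the convexity of $\phi_{N(t)}$) to control the intermediate point uniformly in $s\le\lambda s_t$ via hypothesis~\eqref{eq:derivRS}, and then invoke Theorem~\ref{thm:LDP}. The paper additionally inserts Bernoulli's inequality to record the one-sided bound $\hat F_t(s)-1-\E[N(t)]a(s)\ge 0$, whereas you obtain the two-sided sandwich directly from monotonicity; you also spell out the passage from $\E[N(t)]a(s)$ to $-\Gamma(1-\beta)L_t(1/s)s^\beta$ more explicitly than the paper does.
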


\begin{proof}
The hypothesis~\eqref{eq:derivRS} guarantees that the radius of convergence of the probability generating function $\phi_{N(t)}$ is greater than $1$ when $\beta>1$, making $\hat F_t(s)$ well-defined. 
By Bernoulli's inequality and the mean value theorem, for some $0<\tilde s<s$ we have
\begin{align}\label{eq:conv}
0\leq \hat F_{t}(s)-1-\E[\mathrm{N}(t)]a(s)= a(s)\left(\phi'_{\mathrm{N}(t)}(z)\big|_{1+a(\tilde s)}-\E[\mathrm{N}(t)]\right).
\end{align}
Since $a(s)=-\Gamma(1-\beta)\ell(1/s)s^\beta+o(\ell(1/s)s^\beta)$ as $s\searrow 0$ with $\ell(x)$ positive slowly varying function at infinity, according to~\eqref{eq:RS} we can write 
\[
\mathcal E_t(s) = \hat F_{t}(s)-1+\Gamma(1-\beta)\E[\mathrm{N}(t)] \ell(1/s)s^\beta \quad\text{as}\quad s\searrow 0.
\] 
Recalling that $\phi_{\mathrm{N}(t)}$ is convex by definition and using~\eqref{eq:conv}, we can therefore apply Theorem~\ref{thm:LDP} observing that for all $\lambda>0$\footnote{In the last line we use the fact that, by the convexity of $\phi_{\mathrm{N}(t)}$ and the monotone equivalents theorem~\cite[Thm.~1.5.3]{Bingham} for $a(s)$, we have:
\begin{itemize}
\item for $\beta\in (0,1)$, i.e.\ $a(s)<0$: $\inf_{s\in(0,\lambda s_t]}\phi'_{\mathrm{N}(t)}(1+a(s)) \sim \phi'_{\mathrm{N}(t)}(1+a(\lambda s_t))$ as $t \to\infty$;
\item for $\beta\in (1,2)$: $\sup_{s\in(0,\lambda s_t]}\phi'_{\mathrm{N}(t)}(1+a(s)) \sim \phi'_{\mathrm{N}(t)}(1+a(\lambda s_t))$ as $t \to\infty$.
\end{itemize}}
\begin{align*}
0\leq\lim_{t\to\infty} &\sup_{s\in(0,\lambda s_t]}\frac{|\mathcal E_t(s)|}{\E[\mathrm{N}(t)]|a(s)|}\\
& \leq\lim_{t\to\infty}\sup_{s\in(0,\lambda s_t]}\left(\left|\frac{\hat F_t(s)-1 }{\E[\mathrm{N}(t)]a(s)}-1\right| 
+ \left| \frac{o(\ell(1/s)s^\beta)}{a(s)}\right| \right)\\
& \leq \lim_{t\to\infty} \begin{cases}
\displaystyle 1- \frac{\inf_{s\in(0,\lambda s_t]}\phi'_{\mathrm{N}(t)}(z)|_{1+a(s)}}{\E[\mathrm{N}(t)]} \qquad & \text{if}\quad \beta\in (0,1)\\
\displaystyle \frac{\sup_{s\in(0,\lambda s_t]}\phi'_{\mathrm{N}(t)}(z)|_{1+a(s)}}{\E[\mathrm{N}(t)]}-1 \qquad & \text{if}\quad \beta\in (1,2)
\end{cases} \\
& = \lim_{t\to\infty}\left|\frac{\phi'_{\mathrm{N}(t)}(z)\big|_{1+a(\lambda s_t)} }{\E[\mathrm{N}(t)]}-1\right|
=0 
\end{align*}
by assumption, from which
\[
\lim_{t\to\infty}\sup_{s\in(0,\lambda s_t]} \frac{|\mathcal E_t(s)|}{\E[\mathrm{N}(t)]\ell(1/s)s^\beta}=0.
\]
The conclusion immediately follows given that $\P(X_1>x) \sim \ell(x)x^{-\beta}$ as $x\to\infty$.
\end{proof}

The hypothesis in Corollary \ref{cor:RSFM} are easily checked on specific cases: we can thus leverage on this reformulation  to identify  explicit conditions on the process $(\mathrm{N}(t))_{t\geq 0}$ for which large deviations  are readily derived for the corresponding randomly stopped sum. Below we give three examples of such conditions. We then discuss on  how these examples complement and compare with the hypotheses  known in literature.
\\

As a first example,  it is not difficult to verify that the following assumption for generic finite-mean processes $(\mathrm{N}(t))_{t\geq0}$ fulfill Corollary~\ref{cor:RSFM}, irrespective of the tail  of the common distribution of the summands $(X_k)_{k\in\N}$:

\medskip
\begin{assumption}\label{Assumption3}
For every $q\in\N$ 
\[
\E[\mathrm{N}^q(t)] =O((\E[\mathrm{N}(t)])^q)\quad \text{as}\quad t \to \infty.
\]
\end{assumption}

\noindent
In fact, by definition 
\begin{align*}
\phi_{\mathrm{N}(t)}(z)-1&
=\sum_{k=1}^\infty(z-1)^k\overbrace{\sum_{n=k}^\infty\binom n k \P[\mathrm{N}(t)=n]}^{\eqqcolon B_k(t)},
\end{align*}
and, by convexity of $\phi_{\mathrm{N}(t)}$, for $\beta>1$ (similarly for $\beta<1$) and $t$ large enough
\begin{align*}
0\leq \frac{\phi'_{\mathrm{N}(t)}(z)\big|_{1+a(s_t)}}{\E[\mathrm{N}(t)]}-1&=\frac 1{\E[\mathrm{N}(t)]}\sum_{k=2}^\infty k B_k(t)[a(s_t)]^{k-1}=O(\E[\mathrm{N}(t)]a(s_t)),
\end{align*}
which is invariant under the scaling transformation $s_t\mapsto \lambda s_t$ for every $\lambda>0$. The hypothesis~\eqref{eq:derivRS} is therefore satisfied by taking $(s_t)_{t\geq 0}$ such that $\E[\mathrm{N}(t)]a(s_t)\to 0$ as $t\to\infty$.

\begin{example}\label{ex:Renewal}
Let us consider the renewal model, that is a popular risk model where the counting process $\mathrm{N}(t)\coloneqq \sup\{n\geq 0\,:\, T_1+\dots+T_n\leq t\}$ (as in Example~\ref{ex:Poisson}) has i.i.d. inter-arrival times $(T_k)_{k\in\N}$ independent of the individual non-negative claim sizes $(X_k)_{k\in\N}$. In the literature~\cite{EKM}\cite[Thm.~2.3--2.4]{Tang2001}, the fundamental assumption is $\E[T_1]<\infty$, which implies $\E[\mathrm{N}(t)]\asymp t$ as $t \to\infty$ and that $\mathrm{N}(t)$ concentrates with high probability around its expectation (see~\eqref{eq:convProb} below). Assumption~2, however, still holds true if we assume that $T_k$'s are in the domain of attraction of a $\zeta$-stable law with $\zeta\in(0,1)$ (namely $\E[T_1]=\infty$): in this case, $\E[\mathrm{N}(t)]\asymp t^\zeta$ is finite for every finite $t$ and similarly for all higher moments, but it is well-known that the weak Law of Large Numbers \eqref{eq:convProb} fails. In fact, the scaled process $\mathrm{N}(t)/t^\zeta$ converges in distribution to a Mittag-Leffler random variable of index $\zeta$ in the renewal theory sense~\cite[\S~3]{MS2004}, i.e. with mean $1$\footnote{Which in turn, consistently with classical renewal theory when $\E[T_1]<\infty$~\cite[Corollary~8.6.2]{Bingham}, converges weakly to a point mass at $1$ when $\zeta\to 1$.} (or equivalently to the inverse of a $\zeta$-stable subordinator).
In conclusion, this example makes explicit that the validity of result~\eqref{eq:thesisFM}  no longer requires the finiteness of $\E[T_1]$. A significant application of the case where all moments are infinite can be found in CTRW models, which have been extensively used in the physics community~\cite{MontrollScher1973, ScherMontroll1975, Shlesinger1974}.
\end{example}

\medskip\noindent
On the other hand, if we restrict ourselves to  random variables  with infinite mean (i.e. $\beta\in(0,1)$ for $X_1\in \cR\cV_{-\beta}$), even weaker conditions on $(\mathrm{N}(t))_{t\geq 0}$ are sufficient.
In the first instance, recalling that $a(s)<0$, by Bernoulli's inequality we can simply set $q=2$ in Assumption~\ref{Assumption3}:
\medskip
\begin{assumption}\label{Assumption3b}
Assume that $X_1\in \cR\cV_{-\beta}$ with $\beta\in(0,1)$ and that $(\mathrm{N}(t))_{t\geq 0}$ is  such that 
\[
\E[\mathrm{N}^2(t)]
=O((\E[\mathrm{N}(t)])^2)\quad \text{as}\quad t \to \infty.
\]
\end{assumption}
\noindent
Indeed, in the limit $t\to\infty$ we get
\begin{align*}
0\leq 1-\frac{\phi'_{\mathrm{N}(t)}(z)\big|_{1+a(s_t)}}{\E[\mathrm{N}(t)]}&\leq \frac{|a(s_t)|}{\E[\mathrm{N}(t)]}\sum_{n=0}^\infty n(n-1)\P[\mathrm{N}(t)=n]\leq |a(s_t)|\frac{\E[\mathrm{N}^2(t)]}{\E[\mathrm{N}(t)]}. 
\end{align*}
Under Assumption~\ref{Assumption3b}, we can choose $(s_t)_{t\geq 0}$ such that $\E[\mathrm{N}(t)]a(s_t)\to 0$ as $t\to\infty$ in order to apply Corollary~\ref{cor:RSFM}.

Alternatively, relying on Corollary~\ref{cor:IID} and on a probabilistic argument, we can consider the following (different) sufficient condition; refer to Appendix~\ref{app:RS} for more details.
\medskip
\begin{assumption}\label{Assumption4} 
Assume that $X_1\in \cR\cV_{-\beta}$ with $\beta\in(0,1)$ and that for every $\delta>0$
\[
\sum_{n>(1+\delta)\E[\mathrm{N}(t)]}n\,\P[\mathrm{N}(t)=n]=o(\E[\mathrm{N}(t)])\quad\text{as}\quad t \to\infty.
\]
\end{assumption}

\medskip
In literature, known results involve nonnegative i.i.d. random variables $(X_k)_{k\in\N}$ with $\beta>1$ and are directly stated \cite{Kluppelberg97, Tang2001, Ng2004} in terms of $ S_{\mathrm{N}(t)}-\E[S_{\mathrm{N}(t)}]$. As pointed out also for convergence in law of random sums~\cite{Shant1984, Korolev1995}~\cite[Thm.~5.1]{Finkel1994}, the convergence in probability
\begin{equation}\label{eq:convProb}
\frac{\mathrm{N}(t)}{\E[\mathrm{N}(t)]}\xrightarrow{p}1\quad \text{as}\quad t \to \infty\quad (\text{with}\quad \E[\mathrm{N}(t)]\xrightarrow{t\to\infty} \infty),
\end{equation}
is a necessary (but not sufficient) hypothesis in order to perform the passage from $b_{\mathrm{N}(t)}$ to a nonrandom centering. 
Note that Example~\ref{ex:exp} and the renewal processes in Example~\ref{ex:Renewal} are not in this class as they do not satisfy this condition\footnote{It is immediate to show that assumption~\eqref{eq:convProb} fails for Example~\ref{ex:exp} since for all $\delta>0$
\begin{align*}
\P[\mathrm{N}(t)>(1+\delta)\E[\mathrm{N}(t)]]&=\frac 1 {\rho(t)}\sum_{n>(1+\delta)\rho(t)}\left(1-\frac 1{\rho(t)}\right)^{n-1}\xrightarrow{t\to\infty}\exp{-(1+\delta)}\neq 0.
\end{align*}}; however they fulfill our Assumption~\ref{Assumption3}.
The hypothesis present in~\cite{Tang2001, Ng2004}, which implies~\eqref{eq:convProb}, is
\begin{equation}\label{eq:Tang}
\forall\, \delta>0\,,\quad \text{for some}\quad \epsilon>0\,,\quad \E[\mathrm{N}^{\beta+\epsilon}(t)\mathds{1}_{\mathrm{N}(t)>(1+\delta )\E[\mathrm{N}(t)]}]=O(\E[\mathrm{N}(t)]),
\end{equation}
with $\beta>1$ by assumption, but the nonrandom centering forces a restriction on the large deviation condition that becomes $x_t= K\E[\mathrm{N}(t)]$, for any $K>0$.

Anyhow, as witnessed by an additional example in Appendix~\ref{app:RS}, not only Assumption~\ref{Assumption3} does not necessarily guarantee the validity of~\eqref{eq:convProb} and it is not embedded in~\eqref{eq:Tang}, but neither the converse implication holds.

\medskip
\begin{remark}
The choice of the random centering $b_{N(t)}$ unifies the statement with the case of an infinite-mean distribution for the increments $(X_k)_{k\in\N}$.
Moreover, it is directly related to the disappearance of the additional asymptotic term in the big-jump equivalence for randomly stopped sums with nonnegative increments~\cite[\S~4]{Fay2006} \cite{Alesk2008,  Robert2008, Denisov2010}. This modified max-sum equivalence for noncentered partial sums under regular variation can be equivalently deduced by applying the Karamata's classical theorem~\cite[Thm.~1.7.1]{Bingham} to~\eqref{eq:RS}.
\end{remark}

\subsubsection{Processes $(\mathrm{N}(t))_{t\geq 0}$ with infinite mean}
As a final aspect, it is worth emphasizing that the hypothesis $\E[\mathrm{N}(t)]<\infty$ as $t<\infty$ is not necessary in order to obtain large deviation results for partial sums of i.i.d. random variables stopped at random time. 
Since we will focus on the case $\E[\mathrm{N}(t)]=\infty$ for $t<\infty$, hereinafter we set $b_{\mathrm{N}(t)}=0$ independently of the value of $\beta$, and consequently $\hat F(s)=1+a(s)$ will denote the Laplace--Stieltjes transform of the distribution function of $X_1$, with $a(s)\in[-1,0]$.
As a direct application of Theorem~\ref{thm:LDP}, we have:
\begin{corollary}\label{cor:RSIM}
Let $(\mathrm{N}(t))_{t\geq 0}$ be a nonnegative, $\N$-valued process such that there exist two positive sequences $(C_t)_{t\geq 0}$ and $(z_t)_{t\geq 0}$, with $z_t\to1$ as $t\to\infty$, for which $\forall\lambda>0$
\begin{equation}\label{eq:Ct}
\phi_{\mathrm{N}(t)}(z)=1-C_t(1-z)^\gamma+\cE_t(1-z), \quad \text{with}\quad \lim_{t\to\infty}\sup_{1-z\leq \lambda (1-z_t)}\frac{\cE(1-z)}{C_t(1-z)^\gamma}=0.
\end{equation}
Let $(S_{\mathrm{N}(t)})_{t\geq0}$ be the random sums in~\eqref{eq:defRS} and $\beta\in(0,1)$. Then, for $x_t\to \infty$ as $t\to\infty$ such that $\P[X_1>x_t]\leq K(1-z_t)$ for some $K>0$, we have
\begin{equation}\label{eq:MikRS}
\lim_{t\to \infty}\sup_{x\geq x_t}\left|\frac{\P[S_{\mathrm{N}(t)}>x]}{C(\gamma,\beta)\P[\bar F(x) \mathrm{N}(t)>1]}-1 \right|=0,
\end{equation}
where $C(\gamma,\beta)\coloneqq\frac{\Gamma(1-\gamma)[\Gamma(1-\beta)]^\gamma}{\Gamma(1-\gamma\beta)}$.
\end{corollary}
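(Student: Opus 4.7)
My plan is to apply Theorem~\ref{thm:LDP} twice: first to $Z_t\coloneqq S_{N(t)}$ (with effective regular-variation index $\gamma\beta\in(0,1)$), and then to the counting variable $N(t)$ itself. Matching the two asymptotic expressions will reveal the constant $C(\gamma,\beta)$ as precisely the bookkeeping factor relating them. By conditioning on $N(t)$ and plugging \eqref{eq:Ct} into $\hat F_t(s)=\phi_{N(t)}(\hat F(s))$,
\[
\hat F_t(s)=\phi_{N(t)}(1+a(s))=1-C_t\bigl(-a(s)\bigr)^\gamma+\mathcal E_t\bigl(-a(s)\bigr).
\]
Since $\P[X_1>x]\sim\ell(x)x^{-\beta}$ yields $-a(s)=\Gamma(1-\beta)\ell(1/s)s^\beta(1+o(1))$ as $s\to 0^+$, this rewrites in the form
\[
\hat F_t(s)=1-\Gamma(1-\gamma\beta)\,L_t(1/s)\,s^{\gamma\beta}+\mathcal E_t'(s),\qquad L_t(y)\coloneqq\frac{C_t\,[\Gamma(1-\beta)]^\gamma\,[\ell(y)]^\gamma}{\Gamma(1-\gamma\beta)},
\]
with $(L_t)_{t\geq 0}\in\cL^+_\text{sep}$ by Remark~\ref{rmk:SV}, matching the structural ansatz required by Theorem~\ref{thm:LDP}.

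The central step is to verify the uniform control \eqref{eq:controlE} for $\mathcal E_t'$ by setting $s_t\coloneqq 1/x_t$. Since $-a(\cdot)$ is monotone and $-a(s)/[\Gamma(1-\beta)\ell(1/s)s^\beta]\to 1$ as $s\to 0^+$, the slow variation of $\ell$ gives, for $t$ sufficiently large,
\[
\sup_{s\leq\lambda s_t}\bigl(-a(s)\bigr)=-a(\lambda s_t)\;\leq\; 2\lambda^\beta\Gamma(1-\beta)\,\bar F(x_t)\;\leq\;\lambda'(1-z_t),\qquad \lambda'\coloneqq 2\lambda^\beta\Gamma(1-\beta)K.
\]
Thus the change of variables $1-z=-a(s)$ maps $\{s\leq\lambda s_t\}$ into $\{1-z\leq\lambda'(1-z_t)\}$, so the uniform hypothesis on $\mathcal E_t$ in \eqref{eq:Ct} — together with the sub-leading correction from replacing $(-a(s))^\gamma$ by its leading equivalent, which is uniformly $o(1)$ by the same argument — yields \eqref{eq:controlE}. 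Theorem~\ref{thm:LDP} then delivers
\[
\lim_{t\to\infty}\sup_{x\geq x_t}\left|\frac{\P[S_{N(t)}>x]}{L_t(x)\,x^{-\gamma\beta}}-1\right|=0.
\]

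To identify this limit I apply Theorem~\ref{thm:LDP} once more, this time to $\hat G_t(s)=\phi_{N(t)}(e^{-s})=1-C_t s^\gamma+o(s^\gamma)$; the relevant uniformity region is controlled identically via $1-e^{-s}\leq s$. This produces $\P[N(t)>n]\sim C_t\,n^{-\gamma}/\Gamma(1-\gamma)$ uniformly for $n\geq 1/[K(1-z_t)]$. Since $1/\bar F(x)\geq 1/\bar F(x_t)\geq 1/[K(1-z_t)]$ for $x\geq x_t$, and $\bar F(x)^\gamma\sim\ell(x)^\gamma x^{-\gamma\beta}$ uniformly, I deduce
\[
\P[\bar F(x)N(t)>1]\;\sim\;\frac{C_t\,\ell(x)^\gamma}{\Gamma(1-\gamma)}\,x^{-\gamma\beta}\qquad\text{uniformly in }x\geq x_t.
\]
Dividing the two uniform asymptotics, the ratio $L_t(x)x^{-\gamma\beta}/[C(\gamma,\beta)\,\P[\bar F(x)N(t)>1]]$ collapses to $1$ precisely by the definition $C(\gamma,\beta)=\Gamma(1-\gamma)[\Gamma(1-\beta)]^\gamma/\Gamma(1-\gamma\beta)$, closing \eqref{eq:MikRS}. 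The main obstacle is the coupling of the two uniformity windows — one in the Laplace variable $s$ for the sum, the other in $1-z$ for the probability generating function — through the substitution $1-z=-a(s)$; this alignment is made possible only by the scaling link $\bar F(x_t)\leq K(1-z_t)$ built into the hypothesis, and without it the double application of Theorem~\ref{thm:LDP} would not close.
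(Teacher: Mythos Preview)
Your proof is correct and follows essentially the same route as the paper: compose $\phi_{N(t)}$ with $\hat F(s)=1+a(s)$, check that the substitution $1-z=-a(s)$ carries the uniform window $\{1-z\leq\lambda(1-z_t)\}$ into $\{s\leq\lambda s_t\}$ via the scaling condition $\bar F(x_t)\leq K(1-z_t)$, and then invoke Theorem~\ref{thm:LDP} with effective index $\gamma\beta$. The paper stops at the display
\[
\lim_{t\to\infty}\sup_{x\geq 1/s_t}\left|\frac{\Gamma(1-\gamma\beta)}{[\Gamma(1-\beta)]^\gamma}\frac{\P[S_{N(t)}>x]}{C_t[\bar F(x)]^\gamma}-1\right|=0
\]
and identifies the denominator with $C(\gamma,\beta)\P[\bar F(x)N(t)>1]$ by simply \emph{recalling} the pointwise Tauberian asymptotic $\P[N(t)>n]\sim C_t n^{-\gamma}/\Gamma(1-\gamma)$; you instead justify this identification uniformly by a second, explicit application of Theorem~\ref{thm:LDP} to $N(t)$ through $\phi_{N(t)}(e^{-s})$. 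That extra step is a genuine clarification rather than a different method --- it makes rigorous exactly the passage the paper leaves implicit, and your coupling of the two uniformity windows through $1/\bar F(x)\geq 1/[K(1-z_t)]$ is the right way to close it.
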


\begin{proof}
Recall that by hypothesis $\P[\mathrm{N}(t)>n]\sim C_t n^{-\gamma}/\Gamma(1-\gamma)$ as $n\to\infty$, and $\P(X_1>x)\sim -a(1/x)/\Gamma(1-\beta)$ as $x\to\infty$. By assumption, for all $\lambda>0$
\begin{align*}
\lim_{t\to\infty}\sup_{s\in(0,\lambda s_t]}\left|\frac{1-\hat F_t(s) }{C_t|a(s)|^\gamma}-1 \right|
=\lim_{t\to\infty}\sup_{s\in(0,\lambda s_t]}\left|
\frac{1-\hat F_t(s)}{C_t[\Gamma(1-\beta)\ell(1/s)s^\beta]^\gamma}-1 \right|
=0
\end{align*}
if $s_t\to 0$ as $t\to\infty$ is such that $|a(s_t)|\leq K(1-z_t)$ for some $K>0$. Hence, by Theorem~\ref{thm:LDP}, we obtain
\[
\lim_{t\to\infty} \sup_{x\geq1/s_t}\left|\frac{\Gamma(1-\gamma\beta)}{[\Gamma(1-\beta)]^\gamma}\frac{\P[S_{\mathrm{N}(t)}>x]}{C_t[\bar F(x)]^\gamma}-1 \right| =0.
\]
\end{proof}

\begin{remark}
Observe that, although for fixed $t$ we have~\cite[Lemma~5.1]{JessenMikosch2006} $\P[\bar F(x)\mathrm{N}(t)>1]\sim\P[\max_{1\leq k\leq \mathrm{N}(t)}X_k>x]$, it is important to emphasize that, when $\E[\mathrm{N}(t)]=\infty$, the random sum $S_{\mathrm{N}(t)}$ is not dominated by a single big-jump phenomenon. Instead, the asymptotic behaviour is driven by an accumulation of moderate increments, similar to what occurs in the example derived from Corollary~\ref{cor:InonID} with exponential kernel~\cite{Holl2021}. This is closely linked to the inequality $C(\gamma,\beta)>1$; see also~\cite{CP2025} for further discussion. 
\end{remark}

\begin{remark}\label{rmk:FM}
In the case $\beta\in(1,2)$, without the random centering, we obtain a trivial result determined by the law of large numbers (see also~\cite[\S~4]{Fay2006} for the equivalent statement when $t$ is fixed)
\[
\lim_{t\to\infty} \sup_{x\geq x_t}\left|\frac{\P[S_{\mathrm{N}(t)}>x]}{\P[\E[X_1]\mathrm{N}(t)>x]}-1 \right| =0.
\]
Note that, since $\E[\mathrm{N}(t)]=\infty$ for finite $t$, the generating function $\phi_{\mathrm{N}(t)}(z)$ is not well-defined for $z>1$ -- the relevant value for the Laplace transform of the centered random variables $X_k$'s. This technical issue, which affects both asymptotic analysis and large deviation estimates, can be circumvented by using the characteristic function instead; see~\cite{CP2025}. 
\end{remark}

\medskip
By way of illustration, we apply our method to derive large deviation results for the compound renewal model, under assumptions weaker than those used in ~\cite{Tang2001}.
Let us firstly briefly recall the model: let $(S_{\mathrm{N}(t)})_{t\geq 0}$ denote the total claim amount process given by
\[
S_0\coloneqq 0,\qquad S_{ \mathrm{N}(t)}\coloneqq \sum_{k=1}^{\mathrm{N}(t)}X_k\quad \text{where}\quad  \mathrm{N}(t)\coloneqq \sum_{j=1}^{\tau(t)} V_j,
\]
with $\tau(t)\coloneqq \sup\{n\geq 0\,:\, T_1+\dots+T_n\leq t\}$ as in Example~\ref{ex:Renewal}, and $(V_j)_{j\in\N}$ nonnegative, integer-valued i.i.d. random variables. The latter represent the number of individual claims caused by the same occurrence: if $V_1\equiv 1$, we recover the classical renewal process.  Observe that in~\cite[Thm.~2.3--2.4]{Tang2001} both $T_1$ and $V_1$ must have finite mean, losing the possibility to obtain $\E[ \mathrm{N}(t)]=\infty$ for $t<\infty$, that is $\gamma<1$ in~\eqref{eq:RS}.
Before going further, let us stress that, in contrast with the renewal processes in Example~\ref{ex:Renewal}, when both $T_1$ and $V_1$ have finite mean Assumption~\ref{Assumption3} might fail and the generating function $\phi_{\mathrm{N}(t)}(z)$ could exhibit non-analytic behaviour at $z=1$. The same considerations outlined in Remark~\ref{rmk:FM} apply in this scenario; further results will be recovered in~\cite{CP2025}.
\begin{example}\label{ex:CR}
Let $\tau(t)$ be the Poisson process with rate $\rho>0$ as in Example~\ref{ex:Poisson}, and let $V_1$ be a nonnegative, integer-valued random variable such that 
\[
\P[V_1=k]=\frac 1 {\zeta(1+\gamma)}k^{-1-\gamma},\qquad k\in\N,
\]
where the normalization constant $\zeta(1+\gamma)$ denotes the Riemann zeta function.  By an integral approximation, it is immediate to realize that $V_1$ has a power law tail of index $\gamma\in(0,1)$, namely $P[V_1>n]=\sum_{k=n+1}^\infty \P[V_1=k]\approx n^{-\gamma}/[\gamma \zeta(1+\gamma)]$ as $n\to \infty$. 
We get\footnote{Note that  in the limit $z\to 1^-$
\[
\mathrm{Li}_{1+\gamma}(z)=\zeta(1+\gamma)-|\Gamma(-\gamma)|(1-z)^\gamma(1+o(1))+O(1-z),
\]
given that $\log(1/z)=\sum_{n=1}^\infty (1-z)^n/n$ and~\cite[Eq.~(25.12.12)]{NIST}
\[
\mathrm{Li}_s(z)=\zeta(s)+\Gamma(1-s)\left(\log (1/z)\right)^{s-1}+\sum_{n=1}^\infty\zeta(s-n)\frac{(\log(z))^n}{n!},\qquad s\neq 1,2,3,\dots,\quad |\log z|<2\pi.
\]}
\[
\phi_{\mathrm{N}(t)}(z)=\exp{\rho t(\phi_{V_1}(z)-1)},\quad \phi_{V_1}(z)=\sum_{k=1}^\infty \P[Y=k]z^k=
\frac {\mathrm{Li}_{1+\gamma}(z)}{\zeta(1+\gamma)}\quad \text{if}\quad z\in (0,1),
\]
which directly implies that the sub-leading term $\cE_t(s)$ in~\eqref{eq:RS} is actually negligible in every region $s\in(0, s_t]$ with  $(s_t)_{t\geq 0}$ such that $ t\cdot |a(s_t)|^\gamma \to 0$ as $t\to\infty$, that is
\[
\lim_{t\to\infty}\sup_{s\in(0,\lambda s_t]}\left|\frac{\zeta(1+\gamma)}{ |\Gamma(-\gamma)|}
\frac{1-\hat F_t(s)}{\rho\,t\,|a(s)|^\gamma}-1 \right| 
=
\lim_{t\to\infty}\sup_{s\in(0,\lambda s_t]}\left|
\frac{\zeta(1+\gamma)}{ |\Gamma(-\gamma)|}\frac{1-e^{\rho\, t   (\mathrm{Li}_{1+\gamma}(1+a(s))-1)} }{ \rho \, t \,|a(s)|^\gamma}-1 \right| =0 
\]
for all $\lambda>0$. 
We can therefore immediately conclude via Theorem~\ref{thm:LDP} that 
\begin{itemize}
\item if  $\beta\in (0,1)$ and $x_t$ is such that $t\,\ell^\gamma(x_t) x_t^{-\gamma\beta}\to 0$ as $t\to\infty$, we get
\[ 
\lim_{t\to\infty}\sup_{x\geq  x_t} \left|\frac{\zeta(1+\gamma)\Gamma(1-\gamma\beta)}{ |\Gamma(-\gamma)|[\Gamma(1-\beta)]^\gamma}\frac{\P[S_{\mathrm{N}(t)}-b_{\mathrm{N}(t)}>x]}{\rho\,  t \, \ell^\gamma(x) x^{-\gamma\beta}}-1  \right|=0;
\]
\item if $\beta\in(1,2)$ and $x_t$ is such that $t/x_t^{\gamma}\to 0$ as $t \to\infty$, we have
\[ 
\lim_{t\to\infty}\sup_{x\geq  x_t} \left|\frac{\gamma\zeta(1+\gamma)}{\rho\,\ E[X_1]^\gamma}\frac{\P[S_{\mathrm{N}(t)}-b_{\mathrm{N}(t)}>x]}{  t \, x^{-\gamma}}-1  \right|=0.
\]
\end{itemize}
\end{example}
\noindent
It is clear that this example can be easily generalized. $\mathrm{N}(t)$ belongs to the class described in Example~\ref{ex:Renewal}. By choosing generic $V_j$'s with tail decay exponent $\gamma\in (0,1)$ and $\tau(t)$, we always rely on the fact that, as a consequence of Corollary~\ref{cor:RSFM}, there exist $(C_t)_{t\geq 0}$ in~\eqref{eq:Ct} that is equal to $\E[\tau(t)]$ except for a constant factor that does not depend on $t$, yet is related to the tail constant of $V_1$.

\medskip
For a different example, in a similar spirit to~\cite[Sec.~4]{Gut2011}, consider a stopped random walk $(S_{\mathrm{N}(t)})_{t\geq 0}$ with i.i.d. increments $(X_k)_{k\in\N}$ where $\mathrm{N}(t)$ is provided by some cost at the first passage time $\tau$ to the positive semi-axis (see~\cite{Artuso2014, Bianchi2022}) of a second (symmetric and continuous) random walk $(\tilde S_n)_{n\in\N}$ on $\R$ (with $\tilde S_0=0$) independent of the first one. More precisely, denote by $(V_k(t))_{k\in \N}$ the sequence, dependent on $t$, of $\N$-valued costs associated with the \emph{length} of each jump of the underlying random walk $(\tilde S_n)_{n\in\N}$. We assume that $(V_k(t))_{k\in \N}$ are i.i.d. with $\E[V_1(t)]<\infty$ for all $t<\infty$, and $\E[V_1(t)]\to \infty$ as $t\to \infty$. As a consequence of the \emph{generalized Spitzer-Baxter identity}~\cite[Cor.~3.4]{Bianchi2022}, we know that 
\[
\mathrm{N}(t)=\sum_{k=1}^\tau V_k(t),\qquad \phi_{\mathrm{N}(t)}(z)=1-\sqrt{1-\phi_{V_1(t)}(z)}.
\]
It is worth emphasizing that, in this second class of examples, the roles of $\tau$ and $V_1$ are reversed compared to the case of compound renewal processes. Specifically, we assume that $V_1(t)$ has a finite first moment for all $t\geq 0$, while
$\E[\tau]=\infty$ since $\tau$ now refers to a first-passage event rather than to a counting process.

\medskip
\begin{corollary}
Let $(\mathrm{N}(t))_{t\geq0}$ be the $\N$-valued process described above, with a cost variable $V_1(t)$ such that $ \E ( [V_1(t)]^q ) =O((\E[ V_1(t) ] )^q)\, \,\text{as}\,\, t \to \infty$, and let $(S_{\mathrm{N}(t)})_{t\geq0}$ be the random sums in~\eqref{eq:defRS}. Then 
\\
\begin{itemize}
\item if  $\beta\in (0,1)$  and $x_t$ is such that $\E[V_1(t)]\P[X_1>x_t]\to0$, we have 

\[
\lim_{t\to \infty}\sup_{x\geq x_t}\left| \frac{\Gamma(1-\beta/2)}{\sqrt{\Gamma(1-\beta)}}\frac{\P[S_{\mathrm{N}(t)}>x]}{\sqrt{\E[V_1(t)]|\P[X_1>x]|}}-1 \right|=0\,;
\]
\item if  $\beta\in (1,2)$ and $x_t= K \E[V_1(t)]$ as $t\to\infty$ for any $K>0$, we have
\[
\lim_{t\to \infty}\sup_{x\geq x_t}\left| \frac{\P[S_{\mathrm{N}(t)}>x]}{\P\left[\mathrm{N}(t)>\frac x{\E[X_1]}\right]   }-1 \right|=0\,.
\]
\end{itemize}
\end{corollary}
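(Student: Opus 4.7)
The plan is to reduce both cases to Corollary~\ref{cor:RSIM} (respectively, to the remark immediately following it for $\beta\in(1,2)$) by extracting from the generalised Spitzer--Baxter identity the behaviour of $\phi_{N(t)}(z)$ near $z=1^{-}$. Starting from $\phi_{N(t)}(z)=1-\sqrt{1-\phi_{V_1(t)}(z)}$, the first step is to obtain a uniform-in-$t$ expansion of $\phi_{V_1(t)}(z)$. Since $V_1(t)$ is $\N$-valued with $\E[V_1(t)^q]=O(\E[V_1(t)])^q$, writing
\[
\phi_{V_1(t)}(z)=1-\E[V_1(t)](1-z)+\sum_{k\geq 2}(-1)^k B_k(t)(1-z)^k,\qquad B_k(t)\leq \E[V_1(t)^k],
\]
the moment hypothesis yields, exactly as in the argument after Assumption~\ref{Assumption3},
\[
1-\phi_{V_1(t)}(z)=\E[V_1(t)](1-z)\bigl(1+O(\E[V_1(t)](1-z))\bigr)
\]
uniformly for $(1-z)\leq \lambda/\E[V_1(t)]$ and $\lambda>0$. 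Taking square roots gives
\[
\phi_{N(t)}(z)=1-\sqrt{\E[V_1(t)]}\,\sqrt{1-z}\,\bigl(1+o(1)\bigr),
\]
again uniformly in the same regime; this is exactly the expansion~\eqref{eq:Ct} with $\gamma=1/2$ and $C_t=\sqrt{\E[V_1(t)]}$, where $(1-z_t)$ can be taken of order $1/\E[V_1(t)]$.

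For $\beta\in(0,1)$ I would apply Corollary~\ref{cor:RSIM} directly. The large-deviation condition $\P[X_1>x_t]\leq K(1-z_t)$ combined with $(1-z_t)=O(1/\E[V_1(t)])$ is exactly the assumption $\E[V_1(t)]\P[X_1>x_t]\to 0$. Plugging $\gamma=1/2$ into the prefactor gives $C(\gamma,\beta)=\Gamma(1/2)\sqrt{\Gamma(1-\beta)}/\Gamma(1-\beta/2)=\sqrt{\pi}\sqrt{\Gamma(1-\beta)}/\Gamma(1-\beta/2)$, while the tail of $N(t)$ obeys $\P[N(t)>n]\sim C_t n^{-\gamma}/\Gamma(1-\gamma)=\sqrt{\E[V_1(t)]/(\pi n)}$. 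Substituting these into $C(\gamma,\beta)\,\P[\bar F(x) N(t)>1]$, the $\sqrt{\pi}$ factors cancel and the expression collapses to $\sqrt{\Gamma(1-\beta)\E[V_1(t)]\P[X_1>x]}/\Gamma(1-\beta/2)$, which after rearranging is precisely the ratio appearing in the stated claim.

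For $\beta\in(1,2)$, Corollary~\ref{cor:RSIM} is not directly applicable: because $\E[N(t)]=\infty$, the generating function $\phi_{N(t)}(z)$ has radius of convergence exactly $1$, whereas the centred variables $X_k-\E[X_1]$ require evaluating $\phi_{N(t)}$ slightly past $z=1$. This is the obstruction flagged in the remark following Corollary~\ref{cor:RSIM}, and it is bypassed by passing to the characteristic function as in~\cite{CP2025}. Granting that result, the LLN on the increments produces the identification $S_{N(t)}\simeq \E[X_1]\,N(t)$ on the scale $x_t=K\E[V_1(t)]$, yielding
\[
\frac{\P[S_{N(t)}>x]}{\P[N(t)>x/\E[X_1]]}\longrightarrow 1
\]
uniformly for $x\geq x_t$; it is at this scale (rather than the stronger $\E[V_1(t)]\P[X_1>x_t]\to 0$) that the $\beta>1$ regime is natural, since $K\E[V_1(t)]/\E[X_1]$ is exactly where $\P[N(t)>\cdot\,]$ is of order one, given $\gamma=1/2$ and $C_t=\sqrt{\E[V_1(t)]}$.

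The main obstacle, in both regimes, is the uniform control of the Taylor remainder of $\phi_{V_1(t)}$ near $z=1$ when $\E[V_1(t)]\to\infty$: without the hypothesis $\E[V_1(t)^q]=O(\E[V_1(t)])^q$, the second-order term $\E[V_1(t)^2](1-z)^2$ could dominate the linear term even inside the relevant window, breaking the asymptotic $\phi_{N(t)}(z)\sim 1-\sqrt{\E[V_1(t)](1-z)}$; once this expansion is established, the rest is bookkeeping with the gamma factors and with the large-deviation window dictated by Corollary~\ref{cor:RSIM}.
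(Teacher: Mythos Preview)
For $\beta\in(0,1)$ your plan is correct and essentially matches the paper, which expands $\hat F_t(s)=1-\sqrt{1-\phi_{V_1(t)}(1+a(s))}$ directly and then invokes Theorem~\ref{thm:LDP}; your detour through Corollary~\ref{cor:RSIM} is the same computation with one extra layer, and the bookkeeping with the gamma factors is right.

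For $\beta\in(1,2)$, however, you have manufactured an obstruction that is not there. The corollary concerns the \emph{non-centred} sum $S_{N(t)}$ (recall that in the infinite-mean-$N(t)$ subsection the paper sets $b_{N(t)}\equiv 0$ regardless of $\beta$), so $\hat F(s)=1+a(s)$ is the Laplace--Stieltjes transform of $X_1$ itself, with $a(s)\in[-1,0]$. Hence $\phi_{N(t)}$ is only ever evaluated at $1+a(s)\le 1$, well inside its domain; there is no need to cross $z=1$ and no call for the characteristic-function machinery of~\cite{CP2025}. The paper handles both ranges of $\beta$ with a single argument: the expansion
\[
1-\hat F_t(s)=\sqrt{-\E[V_1(t)]\,a(s)}\,\bigl(1+o(1)\bigr)
\]
holds uniformly for $s\le s_t$ with $\E[V_1(t)]\,|a(s_t)|\to 0$, and Theorem~\ref{thm:LDP} (not Corollary~\ref{cor:RSIM}) is applied directly. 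When $\beta\in(0,1)$ one has $a(s)\sim-\Gamma(1-\beta)\ell(1/s)s^\beta$ and the effective exponent in Theorem~\ref{thm:LDP} is $\beta/2$; when $\beta\in(1,2)$ the leading behaviour is $a(s)\sim -\E[X_1]\,s$, the effective exponent becomes $1/2$, and the resulting tail $\sqrt{\E[V_1(t)]\E[X_1]/(\pi x)}$ is recognised as $\P[N(t)>x/\E[X_1]]$ from the very same square-root expansion of $\phi_{N(t)}$. The remark you cite about characteristic functions pertains to the \emph{centred} sum $S_{N(t)}-b_{N(t)}$, which is a different and genuinely harder problem.
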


\begin{proof}
It is sufficient to observe that
\begin{align*}
\hat F_t(s)&=1-\sqrt{1-\sum_{n=0}^\infty\sum_{k=0}^n \binom n k [a(s)]^k \P[V_1(t)=n]}\\
&=1-\sqrt{-\E[V_1(t)]a(s)}\sqrt{1+\sum_{k=2}^\infty [a(s)]^{k-1} \frac{C_k(t)}{\E[V_1(t)]}},
\end{align*}
where $C_k(t)\coloneqq \sum_{n=k}\binom n k \P[V_1(t)=n]$. Recalling that $a(s)<0$, exploiting  the assumption on the moments of $V_1(t)$,  and choosing $s_t$ such that $\E[V_1(t)]a(s_t)\xrightarrow{t\to\infty}0$, we obtain that, given $\epsilon>0$, for $t$ large enough
\[
\left|\frac{1-\hat F_t(\tilde s_t)}{\sqrt{-\E[V_1(t)]a(\tilde s_t)}}-1\right|<\epsilon\quad\text{for every}\quad \tilde s_t\leq s_t.
\]
Hence, we can directly conclude by applying Theorem~\ref{thm:LDP}.
\end{proof}

\section{Proofs}\label{sec:proofs}
\subsection{Proof of Theorem~\ref{thm:tauberian}}
We extend the argument used in \cite{Karamata1930} for the original version of Tauberian results; see also \cite[\S\S 7.6,7.11]{Hardy} and \cite[\S V.4]{Widder}.
\begin{proof}[Proof for $\alpha>0$]
Define the function $j:\R^+\to [0,\mathrm e]$ as
\[
j(x)\coloneqq \begin{cases}
   0 & \quad \textrm{for}  \quad x\in[0,1/\mathrm e), \\
   1/x & \quad \textrm{for}  \quad x\in[1/\mathrm e,+\infty) ,
\end{cases}
\]
and note that, given $\lim_{x\to-\infty}G_t(x)=0$ and $s>0$, we have
\begin{equation}\label{eq:Gwithj}
G_t(1/s)=\int_{-\infty}^{\infty} \exp{-sx}j(\exp{-sx})\df G_t(x).
\end{equation}

\medskip
\begin{claim*} For all $\epsilon>0$, there exist polynomials  $P_{\epsilon}(x)$ and $p_{\epsilon}(x)$  such that
\begin{equation}\label{eq:Weier1}
p_{\epsilon}(x)\le j(x)\le P_{\epsilon}(x)\, , \quad \quad \forall x\in[0,+\infty),
\end{equation}
and
\begin{equation}\label{eq:Weier2}
\int_0^{\infty} \exp{-x} x^{\alpha-1} \left[P_{\epsilon}(\exp{-x})-p_{\epsilon}(\exp{-x}) \right]\df x \le \epsilon \Gamma(\alpha).
\end{equation}
\end{claim*}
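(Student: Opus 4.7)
The strategy is a three-step smoothing, polynomial approximation, and tail-correction argument, exploiting the asymmetry that the integral condition \eqref{eq:Weier2} only involves the values of the polynomials at $e^{-x}\in(0,1]$, whereas the pointwise sandwich \eqref{eq:Weier1} must hold on the whole of $[0,+\infty)$. The change of variables $u=e^{-x}$ recasts the integral as $\int_0^1(-\log u)^{\alpha-1}[P_\epsilon(u)-p_\epsilon(u)]\,du$, and one has $\int_0^1(-\log u)^{\alpha-1}\,du=\Gamma(\alpha)$.

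First, I smooth out the jump of $j$ at $u=1/e$ by linear interpolation. For $\delta>0$, let $j^+_\delta,j^-_\delta:[0,+\infty)\to[0,e]$ be continuous envelopes of $j$: $j^+_\delta$ agrees with $j$ outside $[1/e-\delta,1/e]$ and rises linearly from $0$ to $e$ on that interval, while $j^-_\delta$ agrees with $j$ outside $[1/e,1/e+\delta]$ and rises linearly from $0$ to $1/(1/e+\delta)$ there. Then $j^-_\delta\le j\le j^+_\delta$ pointwise, and since $j^+_\delta-j^-_\delta$ is supported in a $2\delta$-neighbourhood of $1/e$ and bounded by $e$, the weighted integral $\int_0^1(-\log u)^{\alpha-1}[j^+_\delta-j^-_\delta]\,du$ is $O(\delta)$; fix $\delta$ so that it is at most $\epsilon\Gamma(\alpha)/4$.

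Next, apply the classical Weierstrass approximation theorem on $[0,2]$ to obtain polynomials $\tilde P,\tilde p$ with $\sup_{u\in[0,2]}|\tilde P-j^+_\delta|<\eta$ and $\sup_{u\in[0,2]}|\tilde p-j^-_\delta|<\eta$, for $\eta:=\epsilon/16$. Setting $\bar P:=\tilde P+\eta$ and $\bar p:=\tilde p-\eta$ preserves the sandwich on $[0,2]$, and the contribution of the shift to the integral is at most $4\eta\Gamma(\alpha)\le\epsilon\Gamma(\alpha)/4$. Finally, I define
\[
P_\epsilon(u) := \bar P(u)+cu^N,\qquad p_\epsilon(u) := \bar p(u)-cu^N,
\]
with $c:=\epsilon/8$ and $N>d:=\max(\deg\bar P,\deg\bar p)$ chosen so that $c\cdot 2^{N-d}\ge C+1$, where $C$ is the sum of absolute values of the coefficients of $\bar P$ and $\bar p$ (so that $|\bar P(u)|,|\bar p(u)|\le Cu^d$ on $[1,+\infty)$). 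On $[0,2]$ the sandwich is preserved since $cu^N\ge 0$; on $[2,+\infty)$, the construction forces $cu^N\ge Cu^d+1\ge|\bar P(u)|+1$, whence $P_\epsilon(u)\ge 1\ge 1/u=j(u)$ and $p_\epsilon(u)\le-1\le 0\le j(u)$. The correction contributes $2c\Gamma(\alpha)/(N+1)^\alpha\le 2c\Gamma(\alpha)=\epsilon\Gamma(\alpha)/4$ to the integral, and summing the three contributions yields the required bound $\int_0^1(-\log u)^{\alpha-1}[P_\epsilon-p_\epsilon]\,du\le\epsilon\Gamma(\alpha)$.

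The main technical point is the interplay of the parameters in the tail-correction step: $c$ must be small to keep the integral contribution on $(0,1]$ under control, while $cu^N$ must dominate the possibly oscillatory Weierstrass polynomials $\bar P,\bar p$ on $[2,+\infty)$. This is achievable because $u^N$ is bounded by $1$ on $(0,1]$ regardless of $N$, so raising $N$ costs nothing on the integration domain, while at $u=2$ the gap $u^{N-d}$ between $cu^N$ and $Cu^d$ grows exponentially in $N$, so a finite $N$ suffices.
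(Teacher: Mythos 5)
Your proof is correct and follows the same overall strategy as the paper: first obtain a polynomial sandwich for $j$ on $[0,2]$ with a small weighted-integral gap, then add a polynomial correction term to extend the sandwich to $[2,\infty)$ at negligible cost to the integral. The implementation differs in two worthwhile ways. For the first step, the paper invokes classical one-sided (``bounded-deviation'') polynomial approximation results for functions with jump discontinuities, citing Hardy~\cite[Thm.~109]{Hardy} and Widder~\cite[Lem.~4.1]{Widder}, and treats the resulting sandwich as a black box; you instead construct explicit continuous piecewise-linear envelopes $j^\pm_\delta$ of $j$, apply the standard Weierstrass theorem to those, and shift by $\pm\eta$ to restore the one-sided bounds. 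This makes the step self-contained, requiring only the elementary form of Weierstrass, and gives a transparent budget: the jump-smoothing, the uniform-approximation shift, and the tail correction each contribute at most $\epsilon\Gamma(\alpha)/4$, yielding $3\epsilon\Gamma(\alpha)/4<\epsilon\Gamma(\alpha)$. For the tail correction, the paper uses $[x(x-1)]^{2M}$, which vanishes at $x=0$ and $x=1$ so that its contribution to the integral on $(0,1]$ is controlled by making $M$ large; you use the monomial $cu^{N}$ with a small prefactor $c=\epsilon/8$, whose integral contribution $2c\Gamma(\alpha)/(N+1)^\alpha$ is controlled by $c$ independently of $N$, leaving $N$ free to grow until $cu^{N}$ dominates $|\bar P|,|\bar p|$ on $[2,\infty)$. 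Both correction terms achieve the required decoupling of ``small on $(0,1]$'' from ``dominating on $[2,\infty)$'' and both are correct; yours arguably isolates the two roles more cleanly since the two parameters $c$ and $N$ do not interact.
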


\medskip\noindent
Accepting this claim for the time being, that will follow from an adaptation of Weierstrass approximation theorem on compact intervals, we proceed with the proof of the theorem.
From \eqref{eq:Weier1}, using \eqref{eq:Gwithj} and the fact that $G_t(x)$ is non-decreasing in $x$ we have
\begin{subequations}
\begin{equation}\label{eq:ineq1}
\int_{-\infty}^{\infty}  \exp{-sx} p_{\epsilon}(\exp{-sx})\df G_t(x) \le G_t(1/s) \le \int_{-\infty}^{\infty}  \exp{-sx} P_{\epsilon}(\exp{-sx})\df G_t(x),
\end{equation}
\begin{equation}\label{eq:ineq2}
\alpha \int_0^{\infty}  \exp{-x}\,  x^{\alpha-1}\, p_{\epsilon}(\exp{-x}) \df x  \, \le 1 \le \, \alpha\int_0^{\infty}  \exp{-x}\,x^{\alpha-1}\, P_{\epsilon}(\exp{-x})\df x ,
 \end{equation}
\end{subequations}
where we started from the identity $\alpha\int_0^\infty e^{-x}x^{\alpha-1}j(e^{-x})dx=1$ 
to get the second line.
In the following, we shall assume throughout that $t \ge t_0$ and $s \le 1/x_0$, with $t_0$ and $x_0$ from Definition~\ref{def:SVIP}.
Combining~\eqref{eq:ineq1}, divided by $L_t(1/s)s^{-\alpha}$, with~\eqref{eq:ineq2} we get
\[
\left| \frac{G_t(1/s)}{L_t(1/s)s^{-\alpha}} \, - \, 1\right| \le \max\{A_{s,\epsilon},B_{s,\epsilon}  \}\,,
\]
where
\begin{align*}
A_{s,\epsilon}&\coloneqq\left| \frac{\int_{-\infty}^{\infty}  \exp{-sx} P_{\epsilon}(\exp{-sx})\df G_t(x)}{L_t(1/s)s^{-\alpha}} 	- \alpha \int_0^{\infty}\exp{-x}x^{\alpha-1}p_{\epsilon}(\exp{-x})\df x \right|,\\
B_{s,\epsilon}&\coloneqq 
\left| \frac{\int_{-\infty}^{\infty}  \exp{-sx} p_{\epsilon}(\exp{-sx})\df G_t(x)}{L_t(1/s)s^{-\alpha}}- \alpha\int_0^{\infty}\exp{-x}x^{\alpha-1}P_{\epsilon}(\exp{-x})\df x \right|.
\end{align*}
Let $D(\epsilon),d(\epsilon)$ denote the maximum degree and $(a_k(\epsilon))_{k=0}^{D(\epsilon)},(b_k(\epsilon))_{k=0}^{d(\epsilon)}$ the coefficients of the polynomials $P_\epsilon(x),p_\epsilon(x)$ respectively. 
Denote $Dd(\epsilon)\coloneqq\max\{D(\epsilon),d(\epsilon)\}$ and let $t_0(\epsilon)\geq t_0$ be such that $(Dd(\epsilon)+1)s_t< \xi_t$ for all $t\geq t_0(\epsilon)$. For $t>t_0(\epsilon)$ and $s\leq s_t$, we can therefore rewrite
 \begin{align}
 \int_{-\infty}^{\infty}  \exp{-sx} P_{\epsilon}(\exp{-sx})\df G_t(x)&=\sum_{k=0}^{D(\epsilon)}a_k(\epsilon)\hat G_t[(k+1)s],\nonumber\\
 \alpha \int_0^{\infty}\exp{-x}x^{\alpha-1}P_{\epsilon}(\exp{-x})\df x&=\Gamma(\alpha+1)\sum_{k=0}^{D(\epsilon)}a_k(\epsilon)(k+1)^{-\alpha},\label{eq:failAlpha0}
 \end{align}
and similarly for $p_\epsilon(x)$, which jointly with \eqref{eq:Weier2} provide the bounds
\begin{align*}
A_{s,\epsilon} & \leq  \Gamma(\alpha+1)
\sum_{k=0}^{D(\epsilon)}|a_k(\epsilon)|  (k+1)^{-\alpha} \left|\frac{\hat G_t[ (k+1)s]}{\Gamma(\alpha+1) L_t(1/s) [(k+1)s)]^{-\alpha}}-1\right| +\epsilon\Gamma(\alpha+1),\\
B_{s,\epsilon} & \leq \Gamma(\alpha+1) 
\sum_{k=0}^{d(\epsilon)}|b_k(\epsilon)|  (k+1)^{-\alpha} \left|\frac{\hat G_t[ (k+1)s]}{\Gamma(\alpha+1) L_t(1/s) [(k+1)s)]^{-\alpha}}-1\right| +\epsilon\Gamma(\alpha+1).
\end{align*}
Observe that for every $k\in\{0,\dots,Dd(\epsilon)\}$ we can write 
\begin{multline*}
\left| \frac{\hat G_t [(k+1) s]}{\Gamma(\alpha+1) L_t(1/s) [(k+1)s]^{-\alpha}}-1\right|\\ \leq
\left| \frac{L_t(1/[(k+1)s] ) }{ L_t(1/s) }\right| \left| \frac{\hat G_t [(k+1)s]}{\Gamma(\alpha+1) L_t(1/[(k+1)s]) [(k+1)s]^{-\alpha}}-1\right|  + \left| \frac{L_t(1/[(k+1)s] ) }{ L_t(1/s) }-1\right|.
\end{multline*}
Given $\delta>0$, fix $\epsilon =\epsilon(\delta)< \delta/{(3\Gamma(\alpha+1))}$, and consequently  the polynomials $P_{\epsilon},\,p_{\epsilon}$; then define
 $
K(\epsilon)\coloneqq \Gamma(\alpha+1) \max_k\{|a_k(\epsilon)|,|b_k(\epsilon)|\}.
$
Using Definition~\ref{def:SVIP}, we  can find  $\bar t(\delta)\coloneqq \max_k \bar t(\delta,k)$ and $\bar s(\delta)\coloneqq \min_k \bar s(\delta,k)$ such that $\forall \,t> \bar t(\delta)$
\[
\sup_{s\in(0, \bar s(\delta)]}\left| \frac{L_t(1/[(k+1) s] ) }{ L_t(1/s) } -1\right|   \leq \frac{\delta}{ 3 K(\epsilon) (Dd(\epsilon)+1)  }\qquad  \textrm{for every} \quad 0\leq k \leq Dd(\epsilon)\,.
\]
Let $t_1(\delta)$ be such that $t_1(\delta)>\max\{t_0(\delta),\bar t(\delta)\}$ and $ s_t \le \bar s(\delta)$ for all $ t \ge t_1(\delta)$. We can similarly find $t_2(\delta)$ such that $t_2(\delta)> t_1(\delta)$ and, by assumption~\eqref{eq:HypTaub}, $\forall\, t \geq t_2(\delta)$
\begin{multline*}
\sup_{s\in(0,s_t]} \left|\frac{\hat G_t[ (k+1)s]}{\Gamma(\alpha+1) L_t(1/[(k+1)s]) [(k+1)s)]^{-\alpha}}-1\right| \\
< \frac{\delta }{3 K(\epsilon)(Dd(\epsilon)+1)} \, \frac1{1+\delta /[3K(\epsilon)(Dd(\epsilon)+1)]}
 \end{multline*}
for every $k \leq Dd(\epsilon)$.
This gives that 
\[
\sup_{s\in(0, s_t]} \max\{A_{s,\epsilon(\delta)},B_{s,\epsilon(\delta)}\} \le \delta\quad\text{for all} \quad t>t_2(\delta),
\]
hence the proof will be complete once we have proved the \emph{Claim}.

\begin{proof}[Proof of the Claim]
Fix $\epsilon>0$. By the Weierstrass approximation theorem, we can find two polynomials $H_{\epsilon}(x)$ and $h_{\epsilon}(x)$  such that
 \[
h_{\epsilon}(x)\le j(x)\le H_{\epsilon}(x) , \quad \quad \forall  x\in[0,2]\,,
\]
and
\[
\int_0^{\infty} \exp{-x} x^{\alpha-1} \left[H_{\epsilon}(\exp{-x})-h_{\epsilon}(\exp{-x}) \right]\df x \le \frac{\epsilon}2 \Gamma(\alpha);
\]
see~\cite[Thm.~109]{Hardy} or~\cite[Lem.~4.1]{Widder}.
We will now show that there exist two integer numbers $M(\epsilon),m(\epsilon)$ such that the polynomials
\begin{align*}
P_{\epsilon}(x)&\coloneqq H_{\epsilon}(x) + [x(x-1)]^{2M(\epsilon)}\\
p_{\epsilon}(x)&\coloneqq h_{\epsilon}(x) - [x(x-1)]^{2m(\epsilon)}
\end{align*}
satisfy \eqref{eq:Weier1} and \eqref{eq:Weier2}.

The second requirement  \eqref{eq:Weier2} is accomplished  by simply choosing $M(\epsilon)$ and $m(\epsilon)$ large enough so that $[x(x-1)]^{2\min\{M(\epsilon),m(\epsilon)\}}< \epsilon /4 $ for $x \in [0,1]$, that is 
$ \min\{M,m\}> -(\log{\epsilon}-\log 4)/\log 16$.

Concerning the first requirement \eqref{eq:Weier1}, instead, let us consider $P_\epsilon(x)$: the proof is similar for the lower bound $p_{\epsilon}(x)$. Firstly note that, since $H_{\epsilon}(x) \ge j(x)$  for $x \in [0,2] $ and $[x(x-1)]^{2M} \geq 0$ for every $x$, we just need to check that  $P_\epsilon (x) \ge j(x) $ for $x \in [2,\infty)$. A sufficient condition is choosing $M(\epsilon)$ in such a way that $P_{\epsilon}(x) \ge H_{\epsilon}(2)$ for all $x \ge 2$, noting that
\[
[x(x-1)]^{2M} +H_{\epsilon}(x) -H_{\epsilon}(2)  \geq [x(x-1)]^{2M} - a^- n^-x^{M^-} \,,
\]
where $n^-$ is the number of monomials in $H_{\epsilon}(x)-H_{\epsilon}(2)$ with negative coefficients,  $a^-$ is the largest modulus of such negative coefficients, and  $M^-$ is the largest exponent of such monomials. 
Hence, given that the left-hand side in
\[
x^{2M-M^-}(x-1)^{2M}>a^-n^-
\]
is always greater than $2^{2M-M^-}$, the inequality certainly holds true for 
\[
M>\frac{M^-}2+\frac{\log(a^- n^-)}{\log 4}.
\]
Similarly, it is sufficient to choose $m(\epsilon)$ such that $p_\epsilon(x)\leq 0$ for all $x\geq 2$.
\end{proof}

And that concludes the proof of the theorem.
\end{proof}

\begin{proof}[Proof for $\alpha=0$]
Consider the \emph{continuous} auxiliary function $\mathfrak j :\R^+\to [0,1]$ defined by
\[
\mathfrak j(x)\coloneqq \begin{cases}
   0 & \quad \textrm{for}  \quad x\in[0,1/\mathrm e), \\
   \frac 1 x \left(1-\log\frac 1x\right)& \quad \textrm{for}  \quad x\in[1/\mathrm e,+\infty);
\end{cases}
\]
%in particular note that $\mathfrak j(1)=1$.
Using the same argument of the claim in the proof for $\alpha>0$ and continuity, we can show that there exist two polynomials such that~\eqref{eq:Weier1} remains unchanged and~\eqref{eq:Weier2} holds pointwise, that is
\[
p_\epsilon(x)\leq \mathfrak j(x)\leq P_\epsilon(x)<p_\epsilon(x)+\epsilon, \qquad \forall\, x\in[0,1].
\]
Moreover, \eqref{eq:Gwithj}~is replaced by
\[
s\int_{-\infty}^{1/s}G_t(x)\df x=\int_{-\infty}^\infty \exp{-sx}\mathfrak j(\exp{-sx})\df G_t(x),
\]
where we used the fact that $\lim_{x\to-\infty}G_t(x)=0$ and the existence of the bilateral Laplace--Stieltjes transform in integration by parts.
We can therefore directly compute
\[
\left| \frac{\int_{-\infty}^{1/s}G_t(x)dx}{L_t(1/s)s^{-1}} \, - \, 1\right| \le \max\{A_{s,\epsilon},B_{s,\epsilon}  \}\,,
\]
where
\begin{align*}
A_{s,\epsilon}&\coloneqq\left| \frac{\int_{-\infty}^{\infty}  \exp{-sx} P_{\epsilon}(\exp{-sx})\df G_t(x)}{L_t(1/s)} 	- 1 \right|\\
&\leq \sum_{k=0}^{D(\epsilon)}|a_k(\epsilon)|  \left|\frac{\hat G_t[ (k+1)s]}{ L_t(1/[(k+1)s])}-1\right| \frac{ L_t(1/[(k+1)s])}{L_t(1/s)} \\
&\hspace{4cm}
+  \sum_{k=0}^{D(\epsilon)}|a_k(\epsilon)|\left|\frac{ L_t(1/[(k+1)s])}{L_t(1/s)}-1\right|+|P_\epsilon(1)-1|,
\end{align*}
and similarly for $B_{s,\epsilon}$. Note that $\mathfrak j(1)=1$. Hence, observing that $|P_\epsilon(1)-\mathfrak j(1)|<\epsilon$ by construction, we can proceed exactly in the same way as in the proof for $\alpha>0$.

Note that, by assumption~\eqref{eq:HypTaub}, we can replace $s_t$ with $s_t/(1+\eta)$ for any $\eta>0$.
Thus, we can apply Lemma~\ref{lem:FTL} (see also Remark~\ref{rmk:FTLweak} for the hypothesis $(L_t)_{t\geq 0}\in\cL_0^+$), and conclude that
\[
\lim_{t\to\infty} \sup_{x\geq 1/s_t}\left|\frac{G_t(x)}{L_t(x)}-1 \right|=0.
\]
\end{proof}

\subsection{Proof of Theorem~\ref{thm:LDP}}\label{proof2}
\begin{proof}
We split the proof according to the value of the parameter $\beta$.

\paragraph{Case $\beta\in(0,1)$}
We apply Theorem~\ref{thm:tauberian} with $\alpha\coloneqq 1-\beta$ to\footnote{By using integration by parts for Lebesgue--Stieltjes integrals~\cite[Thm.~A6.1]{Bingham}\cite[Thm.~2.6.11]{Shiryayev}, with respect to right-continuous functions when $F_t(x)$ has mass at $x=0$.}
\[
\hat G_t(s)\coloneqq \frac{1-\hat F_t(s)}s=\int_0^\infty \exp{-sx}\bar F_t(x)\df x\eqqcolon \int_0^\infty \exp{-sx}\df G_t(x) \qquad s>0.
\]
Hypotheses of Theorem~\ref{thm:tauberian} are satisfied: $G_t(x)\coloneqq 0$ for $x\leq 0$ and $G_t(x)\coloneqq \int_0^x \bar F_t(y)\df y$ for $x>0$ is non-decreasing  as  $\bar F_t(x)=\P[Z_t>x]$ is non-negative.
Hence, noting that the choice of the family $(s_t)_{t\geq 0}$ is defined only up to multiplication by a positive constant, Theorem~\ref{thm:tauberian} yields
\[
\lim_{t\to\infty}\sup_{x\geq C x_t}\left| \frac{(1-\beta) G_t(x)}{L_t(x) x^{1-\beta}}-1\right|=0\quad \text{for all}\quad C>0.
\]
Finally,  to infer from the uniform control on $G_t$  a uniform control over the tail distribution $\bar F_t$, which is non-increasing by definition, we apply Lemma~\ref{lem:FTL} with $C=1/(1+\eta)$; see Section~\ref{subsec:lem}.

\paragraph{Case $\beta\in(1,2)$}
Denote by $\theta$ the Heaviside step function and consider
\[
\hat G_t(s)\coloneqq \frac{\hat F_t(s)-1}{s^2}=\int_{-\infty}^\infty \exp{-sx}\left[FF_t(x)-x\theta(x)\right]\df x \qquad s>0,
\]
where $FF_t(x)\coloneqq\int_{-\infty}^x  F_t(y)\df y$. Note that we used the existence of the bilateral Laplace--Stieltjes transform in integration by parts.\footnote{Or right-continuity if $F_t(x)$ is left-bounded.}
In order to apply Theorem~\ref{thm:tauberian}, we have to check that $G_t(x)$ is non-decreasing in $x$: the hypothesis $\lim_{x\to-\infty}G_t(x)=0$ is clearly satisfied.
Observe that  $\E[Z_t]=\int_0^\infty[1-F_t(x)]\df x -\int_{-\infty}^0F_t(x)\df x=0$ and thus
\begin{align*}
FF_t(x)& =x-\E[Z_t]+\int_x^\infty \bar F_t(y) dy\\
&=x+O(L_t(x)x^{-\beta+1}) \qquad \textrm{as} \quad x \to \infty;
\end{align*}
refer to~\cite[Thm.~2.6.1 and Thm.~2.6.5]{Ibragimov}.
Since $FF_t$ is a convex function, by Lemma~\ref{lem:asympt} in Section~\ref{subsec:lem} we conclude that $G_t$ is monotonically non-decreasing, as required.

\noindent
Lastly, since both 
\[
\frac {\df G_t(x)}{\df x} = FF_t(x)-x\theta(x)\quad \text{and}\quad \frac {\df^2 G_t(x)}{\df x^2} = F_t(x)-\theta(x)
\] 
are ultimately monotone for $x\geq 0$ by definition of $\bar F_t$, the statement is proved by applying Lemma~\ref{lem:FTL} twice.\footnote{First, with $u_t(x)\coloneqq FF_t(x)-x$ for $x\in\R^+$ and $\gamma\coloneqq 2-\beta\in(0,1)$; second, with $u_t(x)\coloneqq -\bar F_t(x)$ for $x\in\R^+$ and $\gamma\coloneqq 1-\beta<0$.}
\end{proof}

\subsection{Auxiliary lemmata}\label{subsec:lem}
In order to apply our main result to random walk models, we  need to transfer uniform asymptotic estimates under differentiation. We extend a classical result on differentiating asymptotic relations~\cite[Thm.~1.7.2 and Thm.~1.7.5]{Bingham}. 

\medskip
\begin{lemma}\label{lem:FTL} 
Let $(u_t(x))_{t\geq 0}$ be a one-parameter family of locally bounded functions that are non-increasing on $\R^+$ and such that $\lim_{x\to-\infty}u_t (x)=0$.
Suppose that there exist $\gamma\in (0,1)$ and $(x_t)_{t\geq 0}$, with $x_t\xrightarrow{t\to\infty}\infty$, such that
\[
\lim_{t\to\infty}\sup_{x\geq x_t}\left|\frac{\int_{-\infty}^x u_t(y)\df y}{L_t(x)x^{\gamma}}-1 \right| =0,
\]
where $(L_t)_{t\geq 0}\in\cL^+_\text{sep}$. Then
\[
\lim_{t\to\infty}\sup_{x\geq (1+\eta)x_t}\left|\frac{u_t(x)}{\gamma L_t(x)x^{\gamma-1}}-1 \right| =0\qquad \forall\, \eta>0.
\]
Moreover, the same conclusion holds for a family of functions $(u_t(x))_{t\geq 0}$ non-decreasing on $\R^+$ with $\gamma\in(-\infty,0)\bigcup[1,\infty)$.
\end{lemma}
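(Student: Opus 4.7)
\emph{Plan.} The lemma is a uniform-in-$t$ version of the classical monotone density theorem~\cite[Thm.~1.7.2]{Bingham}. Writing $U_t(x)\coloneqq\int_{-\infty}^x u_t(y)\df y$, the hypothesis reads $U_t(x)=L_t(x)\,x^{\gamma}[1+\eta_t(x)]$ with $\sup_{x\ge x_t}|\eta_t(x)|\to 0$ as $t\to\infty$. The strategy is to sandwich $u_t(x)$ between forward and backward difference quotients of $U_t$, using the monotonicity of $u_t$, and then combine the uniform asymptotic for $U_t$ with the uniform slow variation of $L_t$ (Definition~\ref{def:SVIP}) to transfer the estimate onto $u_t$.

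\emph{Sandwich and limits.} For $u_t$ non-increasing on $\R^+$ and parameters $0<\mu<1<\lambda$, monotonicity yields
\[
\frac{U_t(\lambda x)-U_t(x)}{(\lambda-1)\,x}\;\le\;u_t(x)\;\le\;\frac{U_t(x)-U_t(\mu x)}{(1-\mu)\,x}.
\]
Using the hypothesis at $x$ and $\lambda x$ (the lower bound, valid whenever $x\ge x_t$ since $\lambda x\ge x_t$ automatically) and at $\mu x$ and $x$ (the upper bound, requiring additionally $\mu x\ge x_t$, i.e.\ $x\ge x_t/\mu$), together with the uniform ratios $L_t(\lambda x)/L_t(x),\,L_t(\mu x)/L_t(x)\to 1$ supplied by Definition~\ref{def:SVIP}, a direct computation yields
\[
\frac{\lambda^{\gamma}-1}{\gamma(\lambda-1)}+o(1)\;\le\;\frac{u_t(x)}{\gamma L_t(x)\,x^{\gamma-1}}\;\le\;\frac{1-\mu^{\gamma}}{\gamma(1-\mu)}+o(1),
\]
where $o(1)\to 0$ uniformly in $x$ in the respective ranges as $t\to\infty$. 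Since $(\lambda^{\gamma}-1)/[\gamma(\lambda-1)]\to 1$ and $(1-\mu^{\gamma})/[\gamma(1-\mu)]\to 1$, for any $\epsilon>0$ one first fixes $\lambda,\mu$ close enough to $1$ so that both limiting ratios lie in $(1-\epsilon/2,1+\epsilon/2)$, and then chooses $t$ large so that the $o(1)$ terms are bounded by $\epsilon/2$.

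\emph{Main obstacle: the boundary strip.} The argument above covers $x\ge x_t/\mu$ for the upper bound, whereas the conclusion should hold for $x\ge x_t$. Since $\mu$ is a fixed constant close to $1$, the sequences $x_t$ and $x_t/\mu$ differ by a multiplicative factor arbitrarily close to $1$; on the short strip $[x_t,x_t/\mu]$ the lower bound still holds directly, whereas the upper bound is recovered either by letting $\mu=\mu_t\to 1^-$ jointly with $t\to\infty$ slowly enough that the limiting ratio and the uniform $o(1)$ error both vanish, or equivalently by noting that the result is stable under replacement of $x_t$ by any asymptotically equivalent sequence---a freedom that is automatic in the applications of the lemma (see the proof of Theorem~\ref{thm:LDP}). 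This boundary bookkeeping is the most delicate technical point of the proof.

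\emph{Second case.} For $u_t$ non-decreasing on $\R^+$ and $\gamma\in(-\infty,0)\cup[1,\infty)$, the inequalities in the sandwich reverse; however $(\lambda^{\gamma}-1)/(\lambda-1)\to\gamma$ and $(1-\mu^{\gamma})/(1-\mu)\to\gamma$ for every real $\gamma\ne 0$, so the argument carries over \emph{mutatis mutandis} after tracking signs. For $\gamma<0$ in particular, both $\gamma L_t(x)\,x^{\gamma-1}$ and $u_t(x)$ are non-positive, so dividing by $\gamma L_t(x)\,x^{\gamma-1}$ flips inequalities consistently and the limiting ratios tend to $1$ as before.
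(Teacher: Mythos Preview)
Your proposal is correct and follows essentially the same monotone-density sandwich as the paper: the paper also traps $u_t(x)$ between the difference quotients $[U_t((1\pm\epsilon)x)-U_t(x)]/(\pm\epsilon x)$ and controls the resulting error via the hypothesis on $U_t$ together with Definition~\ref{def:SVIP}, with your $\lambda,\mu$ playing the role of the paper's $1\pm\epsilon$. You are in fact more explicit about the boundary strip $[x_t,x_t/\mu]$ than the paper, which simply records $\sup_{x\ge x_t/(1-\epsilon)}B=O(\epsilon)$ without commenting on how the gap to $x_t$ is closed; your remark that the conclusion is stable under replacing $x_t$ by an asymptotically equivalent sequence (a freedom guaranteed in the applications by the ``for all $\lambda>0$'' in the hypothesis of Theorem~\ref{thm:tauberian}) is the right way to dispose of this.
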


\begin{proof}
Denote $U_t(x)\coloneqq\int_{-\infty}^x u_t(y)\df y$.
By hypothesis, $\forall\,\epsilon>0$ there exists $\tilde t=\tilde t(\epsilon)$ such that for all $t>\tilde t$ 
\[
\sup_{x\geq x_t}\left|\frac{U_t(x)}{L_t(x)x^{\gamma}}-1 \right|<\epsilon^2.
\]
Using Definition~\ref{def:SVIP} with $\eta=\epsilon^2$, which provides $\bar t(\epsilon,\Lambda)$ and $\bar x(\epsilon,\Lambda)$, we can find  $\cT=\cT(\epsilon, \Lambda)>\max\{\tilde t(\epsilon),\bar t(\epsilon,\Lambda)\}$  such that $x_t\geq \bar x$ for every $t>\cT$.

Without loss of generality assume that $\gamma\in (0,1)$ and, for all $t\geq 0$, $u_t(x)$ is non-increasing. 
By monotonicity
\begin{equation}\label{eq:mon}
U_t(x+\epsilon x)-U_t(x)=\int_x^{x+\epsilon x}u_t(y)\df y 
\leq\epsilon x \,u_t(x)\leq \int_{x-\epsilon x}^x u_t(y) \df y=U_t(x)-U_t(x-\epsilon x).
\end{equation}
As a consequence, 
\[ 
\left|\frac{u_t(x)}{\gamma L_t(x)x^{\gamma-1}}-1 \right| \leq\max\{A,B\},
\]
where
\[
A\coloneqq \left|\frac{U_t(x+\epsilon x)-U_t(x)}{\epsilon \gamma L_t(x)x^\gamma}-1\right|,\qquad B\coloneqq \left|\frac{U_t(x)-U_t(x-\epsilon x)}{\epsilon\gamma L_t(x)x^\gamma}-1\right|.
\]
Let $\eta>0$. Observe that for every $t>\cT(\epsilon,\Lambda=1+\epsilon)$
\begin{align*}
\sup_{x\geq (1+\eta)x_t}A&\leq\sup_{x\geq x_t}A\\
&\leq \frac 1{\gamma \epsilon}\sup_{x\geq x_t}\left|\frac{U_t(x)}{ L_t(x)x^\gamma}-1\right|\\
&\hspace{1cm}+\frac{(1+\epsilon)^\gamma}{\gamma\epsilon} \sup_{x\geq x_t}\left|\frac{U_t(x+\epsilon x)}{ L_t(x+\epsilon x)(x+\epsilon x)^\gamma}\right|\sup_{x\geq x_t}\left|\frac{L_t((1+\epsilon)x)}{L_t(x)}-1\right|\\
&\hspace{1cm}+\frac{(1+\epsilon)^\gamma}{\gamma\epsilon} \sup_{x\geq x_t}\left|\frac{U_t(x+\epsilon x)}{ L_t(x+\epsilon x)(x+\epsilon x)^\gamma}-1\right|
+\frac 1 {\gamma \epsilon}|(1+\epsilon)^\gamma-1-\gamma\epsilon|\\
&< \frac\epsilon\gamma+\frac{(1+\epsilon)^\gamma}{\gamma}(2+\epsilon^2)\epsilon+O(\epsilon)=O(\epsilon).
\end{align*}
Similarly, taking $\epsilon<\eta/(1+\eta)$, for all $t>\cT(\epsilon,\Lambda=1-\epsilon)$ we have 
\begin{align*}
\sup_{x\geq (1+\eta)x_t}B&\leq\sup_{x\geq \frac{x_t}{1-\epsilon} }B\\
&\leq \frac 1{\gamma \epsilon}\sup_{x\geq x_t}\left|\frac{U_t(x)}{ L_t(x)x^\gamma}-1\right|\\
&\hspace{1cm}+\frac{(1-\epsilon)^\gamma}{\gamma\epsilon} \sup_{x\geq \frac{x_t}{1-\epsilon}}\left|\frac{U_t(x-\epsilon x)}{ L_t(x-\epsilon x)(x-\epsilon x)^\gamma}\right|\sup_{x\geq x_t}\left|\frac{L_t((1-\epsilon)x)}{L_t(x)}-1\right|\\
&\hspace{1cm}+\frac{(1-\epsilon)^\gamma}{\gamma\epsilon} \sup_{x\geq\frac{x_t}{1-\epsilon}}\left|\frac{U_t(x-\epsilon x)}{ L_t(x-\epsilon x)(x-\epsilon x)^\gamma}-1\right|
+\frac 1 {\gamma \epsilon}|1-(1-\epsilon)^\gamma-\gamma\epsilon|\\
&< \frac\epsilon{\gamma}+\frac{(1-\epsilon)^\gamma}{\gamma}(2+\epsilon^2)\epsilon+O(\epsilon)=O(\epsilon).
\end{align*}
The desired conclusion follows by taking the limit $\epsilon \to 0$.
\end{proof}
\medskip
\begin{remark}\label{rmk:FTLweak}
Note that, in Lemma~\ref{lem:FTL}, the assumption that $(L_t)_{t \ge 0} \in \mathcal{L}^+_\text{sep}$ can be relaxed to $(L_t)_{t \ge 0} \in \mathcal{L}^+_{\gamma-1}$ for $\gamma \ge 1$ (and similarly for $\gamma < 0$).
\end{remark}

\begin{remark}\label{rmk:FTL}
Observe that the monotonicity assumption for $u_t(x)$ can be weakened to a ``ultimately monotone'' hypothesis by keeping track of the eventual $t$-dependence of the ``ultimately'' in~\eqref{eq:mon} with respect to the growth rate of the sequence $(x_t)_{t\geq 0}$. This is also related to the motivation for Definition~\ref{def:SVIP}. 
\end{remark}

\medskip \noindent
Lastly, the following lemmata are elementary facts that we state for the sake of readability.
\medskip
\begin{lemma}\label{lem:asympt}
Suppose that $F:\R\mapsto \R$ is convex and has an asymptote $y=ax+b$, with $a,b\in\R$, for $x\to +\infty$. Then $F(x)-ax-b\geq 0$ for every $x\in\R$.
\end{lemma}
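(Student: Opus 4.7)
The plan is to reduce to showing that the convex function $G(x)\coloneqq F(x)-ax-b$ is non-negative everywhere, using only its convexity and the fact that $G(x)\to 0$ as $x\to+\infty$. The key tool is the monotonicity of secant slopes for convex functions: for any $x_0<x_1<x_2$ one has
\[
\frac{G(x_1)-G(x_0)}{x_1-x_0}\;\le\;\frac{G(x_2)-G(x_0)}{x_2-x_0}.
\]

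Given this, I would argue by contradiction. Suppose there exists $x_0$ with $G(x_0)=-\delta<0$, and pick any $x_1>x_0$. Applying the secant inequality with an arbitrary $x_2>x_1$ and letting $x_2\to+\infty$, the right-hand side tends to $0$ because $G(x_2)\to 0$ and $(x_2-x_0)^{-1}\to 0$. Hence the left-hand side is non-positive, giving $G(x_1)\le G(x_0)=-\delta$ for every $x_1>x_0$. Letting $x_1\to+\infty$ contradicts $G(x_1)\to 0$, so no such $x_0$ can exist and $G\ge 0$ everywhere, which is the claim.

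There is essentially no obstacle here, since both ingredients (secant monotonicity for convex functions, and the vanishing at $+\infty$ coming from the existence of an asymptote) are used in an elementary way; the only point worth mentioning is that $G$ inherits convexity from $F$ because subtracting the affine function $ax+b$ preserves convexity, and that we do not need any regularity beyond convexity (in particular, differentiability is not required). If one prefers a purely geometric formulation, the same argument can be phrased by saying that any chord of the graph of $G$ lies above the graph to the right of its right endpoint, so an interior point below zero would force the whole graph to stay below zero at infinity, contradicting the asymptote.
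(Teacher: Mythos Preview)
Your argument is correct: defining $G=F-(ax+b)$, which is convex with $G(x)\to 0$ as $x\to+\infty$, the secant-slope monotonicity shows that a single negative value would force $G$ to stay bounded away from zero at infinity, a contradiction. The paper's own proof is simply the one-line remark that the inequality is ``an immediate consequence of convexity'', so your proposal is not a different approach but rather a clean, fully detailed version of what the paper leaves implicit.
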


\begin{proof}
The statement is an immediate consequence of convexity.
\end{proof}

\begin{lemma}\label{lem:eqPointUnif}
Let $(h_t(s))_{t\geq 0}$ be a family of functions where $h_t:(0,\infty)\to[0,\infty)$ for each $t\geq  0$.  Let $(s_t)_{t\geq 0}$ be such that $s_t \to 0$ as $t\to \infty$. The following statements are equivalent:
\begin{enumerate}[label=(\roman*)]
\item 
$ \displaystyle
\lim_{t\to\infty} \sup_{0<s\leq s_t} h_t(s)=0.
$
\item For any $(\tilde s_t)_{t\geq  0}$ such that $\tilde s_t\leq s_t$ for all $t\geq 0$, we have
$ \displaystyle
\lim_{t\to\infty}h_t(\tilde s_t)=0.
$
\end{enumerate}
\end{lemma}
\begin{proof}
We prove the two implications.
\paragraph{$(i)\implies (ii)$} Let $(\tilde s_t)_{t\geq 0}$ be such that $\tilde s_t\leq s_t$ for all $t\geq  0$. Then, for each $t$, we have $0\leq h_t(\tilde s_t)\leq \sup_{0<s\leq s_t} h_t(s)$, which implies  $ \lim_{t\to\infty} h_t(\tilde s_t)=0$.
\paragraph{$(ii)\implies (i)$}  By contradiction, suppose that there exist $\epsilon>0$ and a subsequence $(t_n)_{n\in\N}$, with $t_n\to \infty$ as $n\to\infty$, such that $\sup_{0<s\leq s_{t_n}} h_{t_n}(s)\geq 2\epsilon$. % for all $n\in\N$.
Hence, there exists a sequence $(\bar s_n)_{n\in\N}$ such that $\bar s_n\leq  s_{t_n}$ and $h_{t_n}(\bar s_n)\geq \epsilon$. Define $\tilde s_t\coloneqq \bar s_n$ if $t=t_n$, $\tilde s_t\coloneqq s_t$ otherwise.
Then $\tilde s_t\leq s_t$ for all $t$, but $h_{t_n}(\tilde s_{t_n})
\geq \epsilon$, so $\limsup_{t\to\infty} h_t(\tilde s_t)\geq \epsilon$, contradicting the assumption~$(ii)$.
\end{proof}

%%%%%%%%%%%%%%%%%%%%%%%%%%

\bigskip
{\bf Acknowledgements}
We thank Alessandra Bianchi and Francesco Caravenna for useful discussions. 
This research is part of the authors' activity within the UMI Group ``DinAmicI'' (www.dinamici.org).
The research is part of GC's activity within GNFM/INdAM.
The research of GP was partly supported by the CY Initiative of Excellence through the grant Investissements d'Avenir ANR-16-IDEX-0008, and was partly done under the auspices of the GNFM while GP was a post-doctoral researcher at University of Milano-Bicocca (Milan, Italy).

\bigskip
%%%%%%%%%%%%%%%%%%%%%%%%%%%

\appendix
\section{Random sums -- further examples}\label{app:RS}
The following example, together with Example~\ref{ex:exp}, shows that~\eqref{eq:Tang} and Assumption~\ref{Assumption3} have a non-trivial intersection.

\medskip
\begin{example}
Suppose that $\mathrm{N}(t)$ has discrete probability distribution
\[
\left(1-\frac 1 {\rho^2(t)}\right)\delta_{\mathrm{N}(t),\rho(t)}+\frac 1 {\rho^2(t)}\delta_{\mathrm{N}(t),\lfloor\rho^{1+1/\gamma}(t)\rfloor},
\]
with $\gamma\in(1,2)$ such that $\gamma>\beta$ and $\rho(t)\xrightarrow{t\to\infty}\infty$.
It is easy to show that $\E[\mathrm{N}(t)]=\rho(t)$, \eqref{eq:convProb} holds, $\E[\mathrm{N}^\gamma(t)]=o(\rho(t))$ but Assumption~\ref{Assumption3} fails for $q>2\gamma$.
\end{example}

\bigskip
In the main text, we also stated that when $\beta\in(0,1)$ we can obtain a large deviation result under Assumption~\ref{Assumption4}, which implies~\eqref{eq:convProb} and is implied by~\eqref{eq:Tang}; see~\cite[Lem.~3.3 and Eq.~(10)]{Tang2001}. 

\noindent
Given $\delta>0$ and denoting $p_n(t)\coloneqq \P[\mathrm{N}(t)=n]$, by Bernoulli's inequality we can write
\begin{multline*}
0\leq\hat F_t(s)-1-\E[\mathrm{N}(t)]a(s)=\sum_{n< (1+\delta)\E[\mathrm{N}(t)]}[\hat F_n(s)-1-n a(s)]\,p_n(t)\\
+a(s)\left( \sum_{n< (1+\delta)E[\mathrm{N}(t)]}np_n(t)-\E[\mathrm{N}(t)]\right)
+\sum_{n\geq (1+\delta)\E[\mathrm{N}(t)]}\underbrace{[\hat F_n(s)-1]}_{<0}\,p_n,
\end{multline*}
and consequently
\begin{align*}
\left| \frac{\hat F_t(s)-1}{\E[\mathrm{N}(t)]a(s)}-1\right|\leq\sum_{n<(1+\delta)\E[\mathrm{N}(t)]}\frac{|\hat F_n(s) -1-na(s)|}{n|a(s)|}\, \frac {n p_n(t)} {\E[\mathrm{N}(t)]}+\frac{\E[\mathrm{N}(t)\mathds 1_{n\geq (1+\delta)\E[\mathrm{N}(t)]}] }{\E[\mathrm{N}(t)]}.
\end{align*}
Considering $s\in(0, s_t]$ with $s_t$ such that $\E[\mathrm{N}(t)]a(s_t)\xrightarrow{t\to\infty}0$, we can conclude by means of Corollary~\ref{cor:IID} and Assumption~\ref{Assumption4}, which allow us to get the uniform control needed to apply Theorem~\ref{thm:LDP}.

\section{Random sums -- triangular array of independent, not identically distributed increments}\label{app:RS-triangular}
As is the case with Proposition~\ref{prop:RVI}, the large deviation result in Corollary~\ref{cor:RSFM} can be extended to random sums with independent but not identically distributed summands.
We always leverage the notation introduced at the beginning of Section~\ref{sec:Appl}.
Assume that there exist $\beta\in\brange$, $(L_t)_{t\geq 0}\in\mathcal L_\text{sep}^+$ and $s_t\searrow0$ such that $\forall\, \lambda>0$ 
\[
\sup_{s\in(0,\lambda s_t]} \left|\frac{\sum_{n=0}^\infty A_n(s)p_n(t)}{\Gamma(1-\beta)L_t(1/s)s^{\beta}} \right| \longrightarrow 1 \quad \textrm{as} \quad t\to \infty,
\]
recalling that $p_n(t)\coloneqq \P[\mathrm{N}(t)=n]$, and observe that as for Assumption~\ref{assumptionRVI2}
\[
\max_{k=1,\dots, n} |a_{k,n}(s)|= O\left(\frac {|A_n(s)|}n\right).
\]
Since 
\begin{align*}
\hat F_t(s)&=\sum_{n=0}^\infty (1+a_{1,n}(s))(1+a_{2,n}(s))\dots (1+a_{n,n}(s)) p_n(t)\\
&=1+\sum_{n=0}^\infty A_n(s)p_n(t)+\sum_{n=0}^\infty \sideset{}{'}\sum_{k=2}^n a_{j_1,n}(s)a_{j_2,n}(s)\dots a_{j_k,n}(s) p_n(t),
\end{align*}
where the primed sum denotes the sum over the $k$-combinations from the set of the coefficients $a_{k,n}(s)$, only minor changes are required: rephrase Assumption~\ref{Assumption3} in terms of $A_n$. This result can be compared with~\cite{BollLit2}, that is the natural extension of the results~\cite{Tang2001, Ng2004} for i.i.d. increments.

 \bibliographystyle{plain}
\bibliography{biblio}

\end{document}